\documentclass[12pt]{article}
\usepackage{amsmath, amsfonts, amssymb}
\usepackage{graphicx, amsthm}
\usepackage{cite}
\usepackage{color}
\usepackage{enumerate}
\setcounter{MaxMatrixCols}{30}
\providecommand{\U}[1]{\protect\rule{.1in}{.1in}}

\newtheorem{theorem}{Theorem}[section]
\newtheorem{lemma}[theorem]{Lemma}

\theoremstyle{definition}
\newtheorem{definition}[theorem]{Definition}

\theoremstyle{remark}
\newtheorem{remark}[theorem]{Remark}
\numberwithin{equation}{section}

\usepackage[margin=1.3in, top=1.3in, bottom=1.3in]{geometry}

\def\R{{\bf R}}

\def\S{{\bf S}}
\def\E{\mathbb{E}}
\def\gph{\textrm{gph}\,}

\def\tr{\textrm{tr}\,}

\def\cl{\textrm{cl}\,}
\def\Diag{\textrm{Diag}\,}
\def\diag{\textrm{diag}\,}
\def\conv{\textrm{conv}\,}

\newcommand{\argmin}{\operatornamewithlimits{argmin}}
\newcommand{\ip}[1] {\left \langle #1 \right \rangle }

\def\E{{\bf E}}

\def\B{\mathcal{B}}

\renewcommand\footnotemark{}
\title{Variational analysis of spectral functions simplified \thanks{University of Washington, Department of Mathematics, 
		Seattle, WA 98195; Research of Drusvyatskiy and Kempton was partially supported by the AFOSR YIP award FA9550-15-1-0237.}}

\begin{document}

\author{D. Drusvyatskiy *
	\thanks{*
		E-mail: ddrusv@uw.edu;	\texttt{http://www.math.washington.edu/{\raise.17ex\hbox{$\scriptstyle\sim$}}ddrusv/}}
\and
$\textrm{C. Kempton }^{\dagger} $
\thanks{$\dagger$
	E-mail: yumiko88@uw.edu;			
}}

%\address{University of Washington 
%	Department of Mathematics 
%	C-138 Padelford, %
%	Seattle, WA 98195}

%	E-mail: ddrusv@uw.edu \newline\noindent %
%	\texttt{http://www.math.washington.edu/{\raise.17ex\hbox{$\scriptstyle\sim$}}ddrusv/}.

\date{\vspace{-5ex}}
\maketitle

\bigskip
\noindent\textbf{Abstract.}  
Spectral functions of symmetric matrices -- those depending on matrices only through their eigenvalues -- appear often in optimization.  
 A cornerstone variational analytic tool for studying such functions is a formula relating their subdifferentials to the subdifferentials of their diagonal restrictions. 
This paper presents a new, short, and revealing derivation of this result. We then round off the paper with an illuminating derivation of the second derivative of $C^2$-smooth spectral functions, highlighting the underlying geometry. All of our arguments have direct analogues for spectral functions of Hermitian matrices, and for singular value functions of rectangular matrices.%, thereby unifying all these cases.

\bigskip

\bigskip

\noindent\textbf{Key words.} Eigenvalues, singular values, nonsmooth analysis,  proximal mapping, subdifferential, Hessian, group actions.

\vspace{0.6cm}

\noindent\textbf{AMS Subject Classification.}  \textit{Primary}  \textit{49J52, 15A18};
\textit{Secondary} \textit{49J53, 49R05, 58D19}.

\section{Introduction}
This work revolves around {\em spectral functions}. These are functions on the space of $n\times n$ symmetric matrices $\S^n$ that depend on matrices only through their eigenvalues, that is, functions that are invariant under the action of the orthogonal group by conjugation. Spectral functions can always be written in a composite form $f\circ\lambda$, where $f$ is a permutation-invariant function on $\R^n$ and $\lambda$ is a mapping assigning to each matrix $X$ the vector of eigenvalues $(\lambda_1(X),\ldots,\lambda_n(X))$ in nonincreasing order.

A pervasive theme in the study of such functions is that various variational properties of the permutation-invariant function $f$ are inherited by the induced spectral function $f\circ\lambda$; see e.g. \cite{cov_orig,diff_1,diff_2,high_order,spec_id,spec_prox,man,mather}.
% An early example is von Neumann's theorem on unitarily invariant matrix norms \cite{von_Neumann}, while  more recent examples include convexity \cite{cov_orig,eval}, smoothness \cite{diff_1,diff_2,high_order,spec_id}, and prox-regularity \cite{spec_prox} among others \cite{man,mather}. 
Take convexity for example.
Supposing that $f$ is closed and convex, the main result of \cite{con_herm} shows that the Fenchel conjugate of $f\circ\lambda$ admits the elegant representation
\begin{equation}\label{eqn:fan}
(f\circ\lambda)^{\star}=f^{\star}\circ\lambda.
\end{equation}   
%Indeed, this is a consequence of von Neumann's trace inequality \cite{}.
 An immediate conclusion is that $f\circ\lambda$ agrees with its double conjugate and is therefore convex, that is, convexity of $f$ is inherited by the spectral function $f\circ\lambda$. 
An elegant characterization of the subdifferential $\partial (f\circ \lambda)(X)$ in terms of $\partial f(\lambda(X))$ then readily follows \cite[Theorem~3.1]{con_herm} --- an important result for optimization specialists. 

In a follow up paper \cite{eval}, Lewis showed that  
even for nonconvex functions $f$, the following exact relationship holds:
\begin{equation}\label{eqn:maineq_intro}
\partial (f\circ\lambda)(X)=\{U(\Diag v)U^T: v\in\partial f(\lambda(X)),\, U\in \mathcal{O}^n_X\},
\end{equation}
where $$\mathcal{O}^n_X:=\{U\in \mathcal{O}^n: X=U(\Diag \lambda(X))U^T\}.$$
Here, the symbol $\mathcal{O}^n$ denotes the group of orthogonal matrices and the symbols $\partial (f\circ\lambda)$ and $\partial f$ may refer to the Fr\'{e}chet, limiting, or Clarke subdifferentials; see e.g. \cite{RW98} for the relevant definitions. Thus calculating the subdifferential of the spectral function $f\circ\lambda$ on $\S^n$ reduces to computing the subdifferential of the usually much simpler function $f$ on $\R^n$. For instance, subdifferential computation of the $k$'th largest eigenvalue function $X\mapsto \lambda_k(X)$ amounts to analyzing a piecewise polyhedral function, the $k$'th order statistic on $\R^n$ \cite[Section 9]{eval}. Moreover, the subdifferential formula allows one to gauge the underlying geometry of spectral functions, through their ``active manifolds'' \cite{spec_id}, for example. 

In striking contrast to the convex case \cite{con_herm}, the proof of the general subdifferential formula \eqref{eqn:maineq_intro} requires much finer tools,  
and is less immediate to internalize. This paper presents a short, elementary, and revealing derivation of equation \eqref{eqn:maineq_intro} that is no more involved than its convex counterpart. Here's the basic idea. Consider the {\em Moreau envelope} 
$$f_{\alpha}(x):=\inf_y\big\{f(y)+\frac{1}{2\alpha}|x-y|^2\big\}.$$
Similar notation will be used for the envelope of $f\circ\lambda$.
In direct analogy to equation \eqref{eqn:fan}, we will observe that the Moreau envelope satisfies the  equation
$$(f\circ\lambda)_{\alpha}=f_{\alpha}\circ\lambda,$$
and derive a convenient formula for the corresponding  proximal mapping. The case when $f$ is an indicator function was treated in \cite{spec_prox}, and the argument presented here is a straightforward adaptation, depending solely on the Theobald--von Neumann inequality \cite{theo_ineq,von_Neumann}. The key observation now is independent of the eigenvalue setting: membership of a vector $v$ in the proximal or in the Fr\'{e}chet subdifferential of any function $g$ at a point $x$ is completely determined by the local behavior of the univariate function $\alpha \mapsto g_{\alpha}(x+\alpha v)$ near the origin. The proof of the subdifferential formula \eqref{eqn:maineq_intro} quickly flows from there. It is interesting to note that the argument uses very little information about the properties of the eigenvalue map, with the exception of the Theobald--von Neumann inequality. Consequently, it applies equally well in a more general algebraic setting of certain isometric group actions, encompassing also an analogous subdifferential formula for functions of singular values derived in \cite{send,singII,send_thes}; a discussion can be found in the appendix. A different Lie theoretic approach in the convex case appears in \cite{Kon}. 

We complete the paper by reconsidering the second-order theory of spectral functions. In \cite{twice_diff,high_order, diff_2}, the authors derived a formula for the second derivative of a $C^2$-smooth spectral function. In its simplest form it reads
\[\nabla^2F(\Diag a)[B] = \text{Diag} \big ( \nabla^2 f(a) \text{diag}(B)
\big ) + \mathcal{A} \circ B,\]
where $\mathcal{A} \circ B$ is the Hadamard product and 
\[\mathcal{A}_{ij} = \begin{cases} \tfrac{\nabla f(a)_i -\nabla
	f(a)_j}{a_i-a_j} & \text{if $a_i \neq a_j$}\\
\nabla^2f(a)_{ii}-\nabla^2f(a)_{ij} & \text{if $a_i =
	a_j$} \end{cases}.\]
%The second derivative being evaluated at a diagonal matrix is strictly for simplicity here.
This identity is quite mysterious, and its derivation is quite opaque geometrically. In the current work, we provide a transparent derivation, making clear the role of the invariance properties of the gradient graph. To this end, we borrow some ideas from 
\cite{ diff_2}, while giving them a geometric interpretation.

The outline of the manuscript is as follows. Section~\ref{sec:not} records some basic notation and an important preliminary result about the Moreau envelope (Lemma~\ref{lem:sub_env}). Section~\ref{sec:orth_inv} contains background material on orthogonally invariant functions. 
Section~\ref{sec:der_sub} describes the derivation of the subdifferential formula and Section~\ref{sec:second} focuses on the second-order theory -- the main results of the paper.

\section{Notation}\label{sec:not}
This section briefly records some basic notation, following closely the monograph \cite{RW98}. The symbol $\E$ will always denote an Euclidean space (finite-dimensional real inner product space) with inner product $\langle \cdot,\cdot\rangle$ and induced norm $|\cdot|$. A closed ball of radius $\varepsilon >0$ around a point $x$ will be denoted by $\B_{\varepsilon}(x)$. The closure and the convex hull of a set $Q$ in $\E$ will be denoted by $\cl Q$ and $\conv Q$, respectively.

Throughout, we will consider functions $f$ on $\E$ taking values in the extended real line $\overline{\R}:=\R\cup\{\pm \infty\}$. For such a function $f$ and a point $\bar{x}$, with $f(\bar{x})$ finite, the {\em proximal subdifferential} $\partial_p f(\bar{x})$ consists of all vectors $v\in \E$ such that there exists constants $r>0$ and $\varepsilon >0$ satisfying
$$f(x)\geq f(\bar{x})+\langle v, x-\bar{x}\rangle -\frac{r}{2}|x-\bar{x}|^2\qquad \textrm{ for all } x\in \B_{\varepsilon}(\bar{x}).$$ 
Whenever $f$ is $C^2$-smooth near $\bar{x}$, the proximal subdifferential $\partial_p f(\bar{x})$ consists only of the gradient $\nabla f(\bar{x})$.
A function $f$ is said to be {\em prox-bounded} if it majorizes some quadratic function. In particular, all lower-bounded functions are prox-bounded. For prox-bounded functions, the inequality in the definition of the proximal subdifferential can be taken to hold globally at the cost of increasing  $r$ \cite[Proposition~8.46]{RW98}. The {\em Fr\'{e}chet subdifferential} of $f$ at $\bar{x}$, denoted $\hat{\partial} f(\bar{x})$, consists of all vectors $v\in \E$ satisfying
$$f(x)\geq f(\bar{x})+\langle v, x-\bar{x}\rangle+ o(|x-\bar{x}|).$$ 
Here, as usual, $o(|x-\bar{x}|)$ denotes any term satisfying $\frac{o(|x-\bar{x}|)}{|x-\bar{x}|}\to 0$. Whenever $f$ is $C^1$-smooth near $\bar{x}$, the set $\hat{\partial} f(\bar{x})$
consists only of the gradient $\nabla f(\bar{x})$. The subdifferentials $\partial_p f(\bar{x})$ and $\hat{\partial} f(\bar{x})$ are always convex, while $\hat{\partial} f(\bar{x})$ is also closed. The {\em limiting subdifferential} of $f$ at $\bar{x}$, denoted $\partial f(\bar{x})$, consists of all vectors $v\in \E$ so that there exist sequences $x_i$ and $v_i\in \hat{\partial} f(x_i)$ with $(x_i,f(x_i),v_i)\to (\bar{x},f(\bar{x}),v)$. The same object arises if the vectors $v_i$ are restricted instead to lie in $\partial_p f(x_i)$ for each index $i$; see for example \cite[Corollary 8.47]{RW98}.
The {\em horizon subdifferential}, denoted $\partial^{\infty}f(\bar{x})$, consists of all limits of $\lambda_i v_i$
for some sequences $v_i\in\partial f(x_i)$ and $\lambda_i\geq 0$ satisfying $x_i\to \bar{x}$ and $\lambda_i\searrow 0$. %Finally, the {\em Clarke subdifferential} of a lower-semicontinuous function $f$ at $x$ is the set $\partial_c f(x)=\cl \conv [\partial f(x)+\partial^{\infty} f(x)]$. 
This object records horizontal ``normals'' to the epigraph of the function. For example, $f$ is locally Lipschitz continuous around $\bar{x}$ if and only if the set  
$\partial^{\infty}f(\bar{x})$ contains only the zero vector.

%\begin{definition}[Subdifferentials]
%Consider a function $f\colon\E\to\overline{\R}$ and a point $x$ with $f(x)$ finite. The {\em proximal subdifferential} of $f$ at $x$, denoted $\partial_p f(x)$, consists of all vectors $v\in \E$ such that there exists constants $r>0$ and $\epsilon >0$ satisfying
%$$f(x)\geq f(\bar{x})+\langle v, x-\bar{x}\rangle %-\frac{r}{2}|x-\bar{x}|^2\qquad \textrm{ for all } x\in \B_{\epsilon}(\bar{x}).$$ 
%The {\em Fr\'{e}chet subdifferential} of $f$ at $x$, denoted %$\hat{\partial} f(x)$, consists of all vectors $v\in \E$ %satisfying
%$$f(x)\geq f(\bar{x})+\langle v, x-\bar{x}\rangle+ %o(|x-\bar{x}|)\qquad \textrm{ as } x\to\bar{x}.$$ 
%The {\em limiting subdifferential} of $f$ at $x$, denoted %$\partial f(x)$, consists of all vectors $v\in \E$ so that there %exist sequences $x_i$ and $v_i\in \partial_p f(x_i)$ with %$(x_i,f(x_i),v_i)\to (x,f(x),v)$.
%\end{definition}

The two key constructions at the heart of the paper are defined as follows. Given a function $f\colon\E\to\overline{\R}$ and a parameter $\alpha >0$, the {\em Moreau envelope} $f_{\alpha} $ and the {\em proximal mapping} $P_{\alpha} f$ are defined by 
\begin{align*}
f_{\alpha}(x)&:=\inf_{y\in\E}\, \big\{f(y)+\frac{1}{2\alpha} |y-x|^2\big\},\\
P_{\alpha} f(x) &:= \argmin_{y\in\E} \big\{f(y)+\frac{1}{2\alpha} |y-x|^2\big\}.
\end{align*}
Extending the definition slightly, we will set $f_{0}(x):=f(x)$.
It is easy to see that $f$ is {\em prox-bounded} if and only if there exists some point $x\in \E$ and a real $\alpha >0$ satisfying $f_{\alpha}(x) >-\infty$. 

The proximal and Fr\'{e}chet subdifferentials are conveniently characterized by a differential property of the function $\alpha\mapsto f_{\alpha}(x+\alpha v)$.
 This observation is recorded below. To this end, for any function $\varphi\colon [0,\infty)\to\overline{\R}$, the one-sided derivative will be denoted by
$$\varphi'_+(0):=\lim_{\alpha\searrow 0} \frac{\varphi(\alpha)-\varphi(0)}{\alpha}.$$

\begin{lemma}[Subdifferential and the Moreau envelope]\label{lem:sub_env}{\hfill \\ }
Consider an lsc, prox-bounded function $f\colon\E\to\overline{\R}$, and a point $x$ with $f(x)$ finite.
Fix a vector $v\in \E$ and define the function
$\varphi\colon[0,\infty)\to\overline{\R}$ by setting
 $\varphi(\alpha):=f_{\alpha}(x+\alpha v)$. Then the following are true.
\begin{enumerate}
\item[(i)]\label{it:fre} The vector $v$ lies in $\hat{\partial} f(x)$ if and only if 
\begin{equation}\label{eqn:fre}
\varphi'_+(0)=\frac{|v|^2}{2}.
\end{equation}
\item[(ii)]\label{it:prox} The vector $v$ lies in $\partial_p f(x)$ if and only if there exists $\alpha >0$ satisfying $x\in P_{\alpha}f(x+\alpha v)$, or equivalently
$$\varphi(\alpha)=f(x)+\frac{|v|^2}{2}\alpha.$$
In this case, the equation above continues to hold for all $\tilde{\alpha}\in [0,\alpha]$.
\end{enumerate}
%Similarly 
%in which case also
%$\ls_{\alpha \downarrow 0} \frac{P_{\alpha}(x+\alpha v)-x}{\alpha}=\{0\}$.
%$$\alpha \downarros 0, y\in P_{\alpha}(x+\alpha v) \rightarrow f(y)\to f(x) \textrm{ and } $$
\end{lemma}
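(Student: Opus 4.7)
The proof rests on a single trivial observation: evaluating the infimum defining $f_\alpha(x+\alpha v)$ at the test point $y=x$ yields the universal upper bound
\begin{equation*}
\varphi(\alpha) \leq f(x) + \tfrac{\alpha|v|^2}{2} \quad \text{for all } \alpha \geq 0,
\end{equation*}
together with $\varphi(0) = f(x)$ by convention. Hence $\limsup_{\alpha\searrow 0}\tfrac{\varphi(\alpha)-\varphi(0)}{\alpha} \leq \tfrac{|v|^2}{2}$ automatically. In (ii) the equation $\varphi(\alpha) = f(x) + \alpha|v|^2/2$ simply asserts that this upper bound is attained with $x$ itself the minimizer, and in (i) only the matching lower bound on the right-sided difference quotient remains.

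For (ii), in the ``$\Rightarrow$'' direction, assume $v \in \partial_p f(x)$. Since $f$ is lsc and prox-bounded, the proximal inequality may be taken globally: $f(y) \geq f(x) + \langle v, y-x\rangle - \tfrac{r}{2}|y-x|^2$ for all $y\in\E$ and some $r>0$. For any $\alpha \in (0, 1/r]$, completing the square yields
\begin{equation*}
f(y) + \tfrac{1}{2\alpha}|y - x - \alpha v|^2 \geq f(x) + \bigl(\tfrac{1}{2\alpha} - \tfrac{r}{2}\bigr)|y-x|^2 + \tfrac{\alpha|v|^2}{2} \geq f(x) + \tfrac{\alpha|v|^2}{2},
\end{equation*}
with equality at $y=x$; hence $x\in P_\alpha f(x+\alpha v)$ and $\varphi(\alpha) = f(x) + \alpha|v|^2/2$. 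The same inequality persists for every $\tilde\alpha\in(0,\alpha]$ because $1/\tilde\alpha\geq 1/\alpha \geq r$. In the ``$\Leftarrow$'' direction, expanding $|y-x-\alpha v|^2$ in the assumed global inequality $f(y) + \tfrac{1}{2\alpha}|y-x-\alpha v|^2 \geq f(x) + \tfrac{\alpha|v|^2}{2}$ immediately rearranges into the proximal inequality with $r = 1/\alpha$.

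For (i), in the ``$\Rightarrow$'' direction, the Fr\'echet condition supplies, for each $\varepsilon>0$, a $\delta>0$ with $f(y)\geq f(x)+\langle v, y-x\rangle-\varepsilon|y-x|$ on $\B_\delta(x)$. Restricting the infimum defining $\varphi(\alpha)$ to $\B_\delta(x)$ and completing the square, the resulting quadratic in $|y-x|$ attains minimum $f(x) + \tfrac{\alpha|v|^2}{2} - \tfrac{\varepsilon^2\alpha}{2}$. The one genuine obstacle in the whole proof is to ensure that, for $\alpha$ small, this local infimum agrees with the true $\varphi(\alpha)$, i.e.~that points $y$ far from $x$ cannot spoil the lower bound; this is precisely where prox-boundedness enters. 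Splitting $\tfrac{1}{2\alpha} = \tfrac{1}{2\bar\alpha}+\tfrac{\bar\alpha-\alpha}{2\alpha\bar\alpha}$ for a fixed $\bar\alpha$ strictly below the prox-threshold, the residual quadratic $\tfrac{\bar\alpha-\alpha}{2\alpha\bar\alpha}|y-x-\alpha v|^2$ forces all near-minimizers into $\B_\delta(x)$ once $\alpha$ is sufficiently small. Dividing by $\alpha$ and sending $\alpha\searrow 0$ then $\varepsilon\searrow 0$ gives $\liminf_{\alpha\searrow 0}\tfrac{\varphi(\alpha)-f(x)}{\alpha}\geq \tfrac{|v|^2}{2}$, which combined with the universal upper bound produces $\varphi'_+(0)=|v|^2/2$. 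For the converse, I would argue by contradiction: if $v\notin\hat\partial f(x)$, pick a sequence $y_n\to x$ with $f(y_n) < f(x) + \langle v, y_n-x\rangle - \varepsilon|y_n-x|$ for some fixed $\varepsilon>0$, set $\alpha_n := |y_n-x|/\varepsilon$, and substitute $y_n$ into the envelope at scale $\alpha_n$. A direct expansion yields $\tfrac{\varphi(\alpha_n)-f(x)}{\alpha_n} \leq \tfrac{|v|^2}{2} - \tfrac{\varepsilon^2}{2}$, contradicting $\varphi'_+(0) = |v|^2/2$.
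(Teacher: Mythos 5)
Your proof is correct, and for the converse implication in part (i) it is essentially identical to the paper's: negate the Fr\'echet inequality along a sequence $y_n\to x$, test the envelope at scale $\alpha_n=|y_n-x|/\varepsilon$, and contradict $\varphi_+'(0)=|v|^2/2$. The differences lie elsewhere. For part (ii) the paper simply cites \cite[Proposition~8.46]{RW98} as ``immediate from definitions,'' whereas you supply the full completing-the-square argument (including why the equality propagates to all $\tilde\alpha\in(0,\alpha]$ via $1/\tilde\alpha\ge 1/\alpha\ge r$); that is a welcome, self-contained substitute. For the forward implication in (i) your route is genuinely different in its handling of minimizers: the paper takes actual proximal points $x_i\in P_{\alpha_i}f(x+\alpha_i v)$, implicitly assuming they exist and converge to $x$, and deduces $(x_i-x)/\alpha_i\to 0$ by playing the expansion against the universal upper bound $\varphi(\alpha)\le f(x)+\tfrac{\alpha}{2}|v|^2$; you instead work with near-minimizers, lower-bound the envelope on a ball via the $\varepsilon$-form of the Fr\'echet inequality, and justify that near-minimizers localize around $x$ by splitting $\tfrac{1}{2\alpha}=\tfrac{1}{2\bar\alpha}+\tfrac{\bar\alpha-\alpha}{2\alpha\bar\alpha}$ and invoking prox-boundedness. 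Your version buys an argument that never needs attainment in the proximal mapping and makes explicit the localization step the paper passes over with ``clearly $x_i$ tend to $x$''; the paper's version is shorter because it leans on those standard facts (cf.\ \cite[Theorem~1.25]{RW98}). One small bookkeeping point in your sketch: when passing from the near-minimizer bound to $\varphi(\alpha)$, take the tolerance to be $o(\alpha)$ (e.g.\ $\alpha^2$) rather than $\alpha$, so that dividing by $\alpha$ still yields $\liminf_{\alpha\searrow 0}\tfrac{\varphi(\alpha)-f(x)}{\alpha}\ge\tfrac{|v|^2-\varepsilon^2}{2}$; with that adjustment the argument is complete.
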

\begin{proof}
Claim $(ii)$ is immediate from definitions; see for example \cite[Proposition 8.46]{RW98}. Hence we focus on claim $(i)$. To this end, note first that the inequality
\begin{equation}\label{eqn:basic}
\frac{f_{\alpha}(x+\alpha v)-f(x)}{\alpha}\leq \frac{|v|^2}{2}\qquad \textrm{holds for any }v\in\E.
\end{equation}
Consider now a vector $v\in\hat{\partial} f(x)$ and any sequences $\alpha_i\searrow 0$ and $x_i\in P_{\alpha_i}(x+\alpha_i v)$. 
We may assume $x_i\neq x$ since otherwise there's nothing to prove.
Clearly $x_i$ tend to $x$
and hence 
\begin{align*}
f_{\alpha_i}(x+\alpha_i v)-f(x)&= f(x_i)-f(x)+\frac{1}{2\alpha_i}|(x_i-x)-\alpha_i v|^2 \\
&\geq o(|x_i-x|)+\frac{1}{2\alpha_i}|x_i-x|^2 +\frac{\alpha_i}{2}|v|^2.
\end{align*}
Consequently, we obtain the inequality
$$\frac{f_{\alpha_i}(x+\alpha_i v)-f(x)}{\alpha_i}\geq  \frac{|x_i-x|}{\alpha_i}\cdot\frac{o(|x_i-x|)}{|x_i-x|}+\frac{1}{2}\Big|\frac{x_i-x}{\alpha_i}\Big|^2+\frac{|v|^2}{2}.$$
Taking into account \eqref{eqn:basic} yields the inequality
$$0\geq \frac{|x_i-x|}{\alpha_i}\cdot\left(\frac{o(|x_i-x|)}{|x_i-x|}+\frac{1}{2}\Big|\frac{x_i-x}{\alpha_i}\Big|\right).$$
In particular, we deduce
$\frac{x_i-x}{\alpha_i}\to 0$, and the equation \eqref{eqn:fre} follows.

Conversely suppose that equation \eqref{eqn:fre} holds, and for the sake of contradiction that $v$ does not lie in $\hat{\partial} f(x)$. Then there exists $\kappa >0$ and a sequence $y_i\to x$ satisfying  
$$f(y_i)-f(x)-\langle v,y_i-x\rangle \leq -\kappa |y_i-x|.$$
Then for any $\alpha >0$, observe 
\begin{align*}
 \frac{f_{\alpha}(x+\alpha v)-f(x)}{\alpha}&\leq\frac{1}{\alpha}\Big( f(y_i)-f(x)+\frac{1}{2\alpha}|(y_i-x)-\alpha v|^2\Big)\\
 &\leq -\kappa \frac{|y_i-x|}{\alpha} +\frac{1}{2}\Big|\frac{y_i-x}{\alpha}\Big|^2 +\frac{|v|^2}{2}.
\end{align*}
 Setting $\alpha_i:=\frac{|y_i-x|}{\kappa}$ and letting $i$ tend to $\infty$ yields a contradiction.
\end{proof}

%\begin{remark}
%It is curious to note that the theorem above also shows that for any $v\in \partial f(x)$, we have  
%$\ls_{\alpha \downarrow 0} \frac{P_{\alpha}(x+\alpha v)-x}{\alpha}=\{0\}$.
%$$\alpha \downarros 0, y\in P_{\alpha}(x+\alpha v) \rightarrow f(y)\to f(x) \textrm{ and } $$
%\end{remark}
%

%Thus, in the notation of Lemma~\ref{lem:sub_env}, a vector $v$ lies in the Fr\'{e}chet subdifferential $\hat{\partial} f(x)$ if and only if the derivative  
%$\varphi'(0)$ of the function $\varphi(\alpha):= f_{\alpha}(x+\alpha v)$ equals $\frac{|v|^2}{2}$. 
%On the other hand, $v$ lies in the smaller set
%$\partial_p f(x)$ if and only if $\varphi$ actually coincides with the linear function $f(x)+\frac{|v|^2}{2}\alpha$ on some interval $(0,\varepsilon)$. 

\section{Symmetry and orthogonal invariance}\label{sec:orth_inv}
Next we recall a basic correspondence between symmetric functions and spectral functions of symmetric matrices. The discussion follows that of \cite{eval}. Henceforth $\R^n$ will denote an $n$-dimensional real Euclidean space with a specified basis. Hence one can associate $\R^n$ with a collection of $n$-tuples $(x_1,\ldots,x_n)$, in which case the inner product $\langle \cdot,\cdot \rangle$ is the usual dot product. The finite group of coordinate permutations of $\R^n$ will be denoted by $\Pi^n$. A function $f\colon\R^n\to\overline{\R}$ is {\em symmetric} whenever it is $\Pi^n$-invariant, meaning 
$$f(\pi x) =f(x) \quad \textrm{ for all }x\in \R^n \textrm{ and } \pi\in \Pi^n.$$ 
It is immediate to verify that if $f$ is symmetric, then so is the Moreau envelope $f_{\alpha}$ for any $\alpha\geq 0$. This elementary observation will be important later.

The vector space of real $n\times n$ symmetric matrices will be denoted by $\S^n$ and will be endowed with the trace inner product 
$\langle X,Y\rangle=\tr XY$, and the induced Frobenius norm $|X|=\sqrt{\tr X^2}$. For any $x\in\R^n$, the symbol $\Diag x$ will denote the $n\times n$ matrix with $x$ on its diagonal and with zeros off the diagonal, while for a matrix $X\in {\bf S}^n$, the symbol $\diag X$ will denote the $n$-vector of its diagonal entries.

The group of real $n\times n$ orthogonal matrices will be written as $\mathcal{O}^n$.
The eigenvalue mapping $\lambda\colon\S^n\to \R^n$ assigns to each matrix $X$ in $\S^n$ the vector of its eigenvalues $(\lambda_1(X),\ldots, \lambda_n(X))$ in a nonincreasing order. A function $F\colon \S^n\to\overline{\R}$ is {\em spectral} if it is $\mathcal{O}^n$-invariant under the conjugation action, meaning 
$$F(UXU^T) =F(X) \quad \textrm{ for all }X\in \S^n \textrm{ and } U\in \mathcal{O}^n.$$
In other words, spectral functions are those that depend on matrices only through their eigenvalues. A basic fact is that any spectral function $F$ on $\S^n$ can be written as a composition of $F=f\circ\lambda$ for some symmetric function $f$ on $\R^n$. Indeed, $f$ can be realized as the restriction of $F$ to diagonal matrices $f(x)=F(\Diag x)$.

Two matrices $X$ and $Y$ in $\S^n$ are said to admit a {\em simultaneous spectral decomposition} if
there exists an orthogonal matrix $U\in \mathcal{O}^n$ such that 
$UXU^T$ and $UYU^T$ are both diagonal matrices. It is well-known that this condition holds if and only if $X$ and $Y$ commute. The matrices $X$ and $Y$ are said to admit a {\em simultaneous ordered spectral decomposition} if
there exists an orthogonal matrix $U\in \mathcal{O}^n$ satisfying
$UXU^T=\Diag\lambda(X)$ and $UYU^T=\Diag\lambda(Y)$.
The following result characterizing this property, essentially due to Theobald \cite{theo_ineq} and von Neumann \cite{von_Neumann}, plays a central role in spectral variation analysis.

\begin{theorem}[Von Neumann-Theobald]\label{thm:fan}
	Any two matrices $X$ and $Y$ in ${\bf S}^n$ satisfy the inequality
	$$|\lambda(X)-\lambda(Y)|\leq |X-Y|.$$
	Equality holds if and only if $X$ and $Y$ admit a simultaneous ordered spectral decomposition.
\end{theorem}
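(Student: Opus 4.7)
The plan is to reduce the stated norm inequality to the classical trace inequality $\langle X,Y\rangle\le\langle\lambda(X),\lambda(Y)\rangle$, prove the latter by a short convex-combinatorial argument, and then extract the equality case.

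For the reduction, I would expand both sides. Since the Frobenius norm is unitarily invariant, we have $|X|^2=\tr X^2=\sum_i\lambda_i(X)^2=|\lambda(X)|^2$ and similarly for $Y$. Thus
\begin{equation*}
|X-Y|^2-|\lambda(X)-\lambda(Y)|^2 = 2\bigl(\langle\lambda(X),\lambda(Y)\rangle-\langle X,Y\rangle\bigr),
\end{equation*}
so the norm inequality is equivalent to the trace inequality $\langle X,Y\rangle\le\langle\lambda(X),\lambda(Y)\rangle$, and equality in the two inequalities is the same statement.

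For the trace inequality, I would diagonalize $X=U\Diag a\,U^T$ and $Y=V\Diag b\,V^T$ with $a=\lambda(X)$ and $b=\lambda(Y)$, and set $W:=V^TU\in\mathcal O^n$. A direct computation yields
\begin{equation*}
\langle X,Y\rangle=\tr\bigl(W^T\Diag a\,W\Diag b\bigr)=\sum_{i,j}W_{ij}^{\,2}\,a_i b_j.
\end{equation*}
The matrix $(W_{ij}^2)$ is doubly stochastic since $W$ is orthogonal, so by Birkhoff's theorem it is a convex combination of permutation matrices. Hence $\langle X,Y\rangle$ is a convex combination of numbers of the form $\sum_i a_i b_{\pi(i)}$, and each such number is bounded above by $\sum_i a_i b_i=\langle\lambda(X),\lambda(Y)\rangle$ by the classical rearrangement inequality (recall that both $a$ and $b$ are sorted in nonincreasing order). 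This gives the required inequality.

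The ``if'' direction of the equality claim is immediate: a simultaneous ordered decomposition means $X$ and $Y$ are simultaneously conjugated to $\Diag\lambda(X)$ and $\Diag\lambda(Y)$ by some $U\in\mathcal O^n$, so $\langle X,Y\rangle=\langle\lambda(X),\lambda(Y)\rangle$ by cyclicity of the trace. The ``only if'' direction is where I expect the main obstacle, essentially because of repeated eigenvalues. If all entries of $a$ and all entries of $b$ are distinct, then equality in the rearrangement step forces each permutation $\pi$ appearing with positive weight in the Birkhoff decomposition of $(W_{ij}^2)$ to satisfy $\sum_i a_i b_{\pi(i)}=\sum_i a_i b_i$, which in turn forces $\pi=\mathrm{id}$; then $(W_{ij}^2)$ is the identity permutation matrix and $W$ is diagonal with $\pm 1$ entries, giving the common diagonalizing basis. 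To handle multiplicities, I would group indices into blocks of constant eigenvalue and observe that within each block the ordered decomposition is unaffected by further orthogonal changes of basis; then the same argument applied to the blocked matrix $(W_{ij}^2)$ (which becomes block-diagonal doubly stochastic after the equality analysis) produces an orthogonal $U$ that simultaneously puts $X$ and $Y$ into ordered diagonal form. This is the step that takes the most care but is standard once the reduction to Birkhoff plus rearrangement is in place.
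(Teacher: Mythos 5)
First, a point of comparison: the paper does not prove this theorem at all — it quotes it as a classical result of Theobald and von Neumann and uses it as a black box — so there is no internal proof to measure your argument against; I can only assess it on its own terms. On those terms, most of it is fine: the expansion of Frobenius norms reducing the statement to the trace inequality $\langle X,Y\rangle\le\langle\lambda(X),\lambda(Y)\rangle$, the identity $\langle X,Y\rangle=\sum_{i,j}W_{ij}^2a_ib_j$ for an orthogonal $W$ built from eigenbases of $X$ and $Y$, the passage through double stochasticity, Birkhoff, and the rearrangement inequality, the ``if'' direction of the equality claim, and the equality analysis when all entries of $a$ and of $b$ are distinct are all correct and complete.

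The gap is in the ``only if'' direction with repeated eigenvalues, precisely the step you flag as ``standard.'' The central claim of your sketch — that after the equality analysis the matrix $(W_{ij}^2)$ becomes block-diagonal — is not correct as stated. What equality actually forces is that the block-aggregated masses $T_{kl}=\sum_{i\in I_k,\,j\in J_l}W_{ij}^2$ (where $I_k$ are the intervals of equal $a$-values and $J_l$ the intervals of equal $b$-values) coincide with the unique monotone coupling $|I_k\cap J_l|$ of the two interval partitions; hence $W_{ij}=0$ exactly on block pairs with $I_k\cap J_l=\emptyset$. That is a staircase support pattern, not a block-diagonal one: for $a=(2,1,1)$, $b=(3,3,1)$ equality only forces $W_{13}=0$, and $W$ need not be block-diagonal with respect to any common refinement. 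From this staircase structure you still must produce a single orthogonal $Q$ with $Q^TXQ=\Diag\lambda(X)$ and $Q^TYQ=\Diag\lambda(Y)$, i.e. show that $W$ factors as a product of an element of the stabilizer of $\Diag\lambda(X)$ and an element of the stabilizer of $\Diag\lambda(Y)$; that requires a genuine argument (a block-by-block peeling/induction on the staircase, or, alternatively, a first-order optimality argument for $U\mapsto\langle X,U(\Diag\lambda(Y))U^T\rangle$ showing that $X$ and $Y$ commute, followed by a co-monotone reordering of a simultaneous eigenbasis). Note also that commutation alone is not enough — $\Diag(1,0)$ and $\Diag(0,1)$ commute but admit no simultaneous \emph{ordered} decomposition — so this ordering issue is exactly what the missing step must address; as written, the equality case with multiplicities is asserted rather than proved, and the intermediate claim it leans on is false.
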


This result is often called a trace inequality, since the eigenvalue mapping being 1-Lipschitz (as in the statement above) is equivalent to the inequality $$\langle \lambda(X),\lambda(Y) \rangle\geq \langle X,Y \rangle  \qquad \textrm{ for all } X,Y\in{\bf S}^n.$$

\section{Derivation of the subdifferential formula}\label{sec:der_sub}
In this section, we derive the subdifferential formula for spectral functions. In what follows, for any matrix $X\in \S^n$ define the diagonalizing matrix set
$$\mathcal{O}_X:=\{U\in\mathcal{O}^n: U(\Diag\lambda(X))U^T=X\}.$$
The spectral subdifferential formula readily follows from Lemma~\ref{lem:sub_env} and the following intuitive proposition, a proof of which can essentially be seen in \cite[Proposition 8]{spec_prox}. 

\begin{theorem}[Proximal analysis of spectral functions]\label{thm:prox_spec} \hfill \\
Consider a symmetric function $f\colon\R^n\to\overline {\R}$. Then the equation 
\begin{equation}
	(f\circ \lambda)_{\alpha}=f_{\alpha}\circ\lambda \qquad \textrm{ holds}. \label{1_prox}
\end{equation}	
In addition, the proximal mapping admits the representation:
\begin{equation}
	P_{\alpha}(f\circ \lambda)(X)=\big\{U\big(\Diag\, y\big)U^T: y\in P_{\alpha}f(\lambda(X)),\, U\in \mathcal{O}_X\big\}. \label{2_prox}
\end{equation}
	Moreover, for any $Y\in P_{\alpha}(f\circ \lambda)(X)$ the matrices $X$ and $Y$ admit a simultaneous ordered spectral decomposition.
\end{theorem}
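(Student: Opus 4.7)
I would prove the envelope identity \eqref{1_prox} by sandwiching $(f\circ\lambda)_{\alpha}(X)$ between two matching bounds, and then read off the prox-map description \eqref{2_prox} together with the simultaneous-diagonalization statement by chasing the equality case of Theorem~\ref{thm:fan}. The whole argument rests on two ingredients: the Theobald--von Neumann inequality with its sharpness clause, and the conjugation-invariance of the Frobenius norm.

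For the upper bound, I would fix any $U\in\mathcal{O}_X$ and test the envelope with matrices of the form $Y:=U(\Diag y)U^T$ for arbitrary $y\in\R^n$. Because $U$ simultaneously diagonalizes $X$, the difference $X-Y$ equals $U(\Diag(\lambda(X)-y))U^T$, so $|X-Y|^2$ collapses to $|\lambda(X)-y|^2$. Moreover $\lambda(Y)$ is merely a permutation of $y$, and symmetry of $f$ gives $f(\lambda(Y))=f(y)$. Taking the infimum over $y$ yields $(f\circ\lambda)_{\alpha}(X)\leq f_{\alpha}(\lambda(X))$.

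For the lower bound, I would insert the Theobald--von Neumann estimate $|X-Y|\geq|\lambda(X)-\lambda(Y)|$ directly into the infimum defining $(f\circ\lambda)_{\alpha}(X)$. Since the resulting integrand depends on $Y$ only through $\lambda(Y)\in\R^n$, relaxing $\lambda(Y)$ to a free variable $y\in\R^n$ can only lower the infimum, producing exactly $f_{\alpha}(\lambda(X))$. This establishes \eqref{1_prox}.

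The inclusion $\supseteq$ in \eqref{2_prox} is already packaged inside the upper-bound construction: any $y\in P_{\alpha}f(\lambda(X))$ makes $Y=U(\Diag y)U^T$ attain the envelope value. For the reverse inclusion together with the final sentence, suppose $Y\in P_{\alpha}(f\circ\lambda)(X)$; rerunning the two inequalities above for this particular $Y$ gives
\[f(\lambda(Y))+\tfrac{1}{2\alpha}|X-Y|^2\;\geq\;f(\lambda(Y))+\tfrac{1}{2\alpha}|\lambda(X)-\lambda(Y)|^2\;\geq\;f_{\alpha}(\lambda(X)),\]
whose outer terms coincide with $(f\circ\lambda)_{\alpha}(X)$ by assumption and by \eqref{1_prox}. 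Forcing equality in the first inequality is the sharpness clause of Theorem~\ref{thm:fan} and delivers a simultaneous ordered spectral decomposition, in particular some $U\in\mathcal{O}_X$ with $Y=U(\Diag\lambda(Y))U^T$; forcing equality in the second identifies $\lambda(Y)$ as a minimizer in the definition of $f_{\alpha}(\lambda(X))$. The only mildly delicate step is this equality chase, and its only subtlety is keeping straight which direction of conjugation defines $\mathcal{O}_X$; everything else is algebraic unpacking of the trace inequality.
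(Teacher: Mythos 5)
Your proposal is correct and follows essentially the same route as the paper: the same two-sided bound on the envelope (testing with $U(\Diag y)U^T$ for $U\in\mathcal{O}_X$ on one side, inserting the Theobald--von Neumann inequality on the other) and the same equality chase through that inequality chain to obtain both the proximal-map formula and the simultaneous ordered spectral decomposition. No gaps.
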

\begin{proof}
	For any $X$ and $Y$, applying the trace inequality (Theorem~\ref{thm:fan}), we deduce
	\begin{align}
%	(f\circ \lambda)_{\alpha}(X)&\geq
 f(\lambda(Y))+\frac{1}{2\alpha} |Y-X|^2\geq
 f(\lambda(Y))+\frac{1}{2\alpha}|\lambda(Y)-\lambda(X)|^2\geq f_{\alpha}(\lambda(X)).\label{eqn:main_eq}
	\end{align}
	Taking the infimum over $Y$, we deduce $(f\circ \lambda)_{\alpha}(X)\geq f_{\alpha}(\lambda(X))$.
	On the other hand, for any $U\in \mathcal{O}_X$, the inequalities hold:
	\begin{align*}
	(f\circ \lambda)_{\alpha}(X)&=	\inf_{Y}\, \big\{f(\lambda(Y))+\frac{1}{2\alpha} |Y-X|^2\big\}\\
	&= \inf_{Y}\, \big\{f(\lambda(Y))+\frac{1}{2\alpha} |U^TYU-\Diag\lambda(X)|^2\big\}\leq f_{\alpha}(\lambda(X)).
	\end{align*}
%	Conversely, applying the trace inequality (Theorem~\ref{thm:fan}),  for any $Y$ we have
%	\begin{equation}\label{eqn:main_eq}
%	f(\lambda(Y))+\frac{1}{2\alpha} |Y-X|^2\geq f(\lambda(Y))+\frac{1}{2\alpha}|\lambda(Y)-\lambda(X)|^2\geq f_{\alpha}(\lambda(X)).
%	\end{equation}
	This establishes \eqref{1_prox}. 
	
	To establish equation \eqref{2_prox}, consider first a matrix $U\in\mathcal{O}_X$ and a vector $y\in P_{\alpha} f(\lambda(X))$, and define $Y:=U(\Diag y)U^T$. Then we have
	\begin{align*}
	(f\circ\lambda)(Y)+&\frac{1}{2\alpha}|Y-X|^2=f(y)+\frac{1}{2\alpha}|y-\lambda(X)|^2
	=f_{\alpha}(\lambda(X))=(f\circ\lambda)_{\alpha}(X).
	\end{align*}
	Hence the inclusion $Y\in P_{\alpha}(f\circ \lambda)(X)$ is valid, as claimed.	
	Conversely, fix any matrix $Y\in P_{\alpha}(f\circ \lambda)(X)$. Then plugging in $Y$  into \eqref{eqn:main_eq}, the left-hand-side equals $(f\circ \lambda)_{\alpha}(X)$ and hence the two inequalities in \eqref{eqn:main_eq} hold as equalities. 
%	$$(f\circ \lambda)_{\alpha}(X)=f(\lambda(Y))+\frac{1}{2\alpha} |Y-X|\geq f(\lambda(Y))+\frac{1}{2\alpha} |\lambda(Y)-\lambda(X)|\geq f_{\alpha}( \lambda(X)),$$
%	where the first inequality follows from Theorem~\ref{thm:fan}. 
%	Equation  \eqref{1_prox} implies equality throughout.
The second equality immediately yields the inclusion $\lambda(Y)\in P_{\alpha} f(\lambda(X))$, while the first along with Theorem~\ref{thm:fan} implies 
 that $X$ and $Y$ admit a simultaneous ordered spectral decomposition, as claimed.
%	Thus there exists a matrix $U\in \mathcal{O}_X$ satisfying
%	$Y=U(\Diag \lambda(Y))U^T$, and therefore 
%$$(f\circ\lambda)_{\alpha}(X)=f(\lambda(Y))+\frac{1}{2\alpha}|Y-X|^2 = f(\lambda(Y))+\frac{1}{2\alpha}|\lambda(Y)-\lambda(X)|^2\geq f_{\alpha}(\lambda(X)).$$	Equation~\eqref{1_prox} implies equality throughout, and therefore the inclusion $\lambda(Y)\in P_{\alpha} f(\lambda(X))$. This completes the proof. 
\end{proof}

Combining Lemma~\ref{lem:sub_env} and Theorem~\ref{thm:prox_spec}, the main result of the paper readily follows.	
\begin{theorem}[Subdifferentials of spectral functions]\label{thm:submain}
Consider an lsc symmetric function $f\colon\R^n\to\overline {\R}$. Then the following equation holds:
\begin{equation}\label{eqn:main}
	\partial (f\circ \lambda)(X)=\big\{U\big(\Diag v\big)U^T: v\in\partial f(\lambda(X)), \, U\in \mathcal{O}_{X} \big\}.
\end{equation}
Analogous formulas hold for the proximal, Fr\'{e}chet, and horizon subdifferentials.
\end{theorem}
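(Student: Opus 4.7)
My plan is to derive all four versions of the formula uniformly by pairing Lemma~\ref{lem:sub_env} with the Moreau envelope identity \eqref{1_prox}. The central observation is a scalar--matrix equivalence: for any $U\in\mathcal{O}_X$ and $v\in\R^n$, setting $V:=U(\Diag v)U^T$ one has $U^T(X+\alpha V)U=\Diag(\lambda(X)+\alpha v)$, so $\lambda(X+\alpha V)$ is a permutation of $\lambda(X)+\alpha v$. Combining this with \eqref{1_prox} and the symmetry of $f_\alpha$ (recorded in Section~\ref{sec:orth_inv}) yields
\[
(f\circ\lambda)_{\alpha}(X+\alpha V)=f_{\alpha}\bigl(\lambda(X)+\alpha v\bigr).
\]
Since also $|V|=|v|$ and $(f\circ\lambda)(X)=f(\lambda(X))$, Lemma~\ref{lem:sub_env} transports the subgradient certificate for $v$ at $\lambda(X)$ verbatim into one for $V$ at $X$. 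This yields the inclusion ``$\supseteq$'' immediately for both the proximal and Fr\'{e}chet subdifferentials.

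The proximal ``$\subseteq$'' inclusion is the dual of the same observation. Given $V\in\partial_p(f\circ\lambda)(X)$, Lemma~\ref{lem:sub_env}(ii) produces $\alpha>0$ with $X\in P_{\alpha}(f\circ\lambda)(X+\alpha V)$; the ``moreover'' clause of Theorem~\ref{thm:prox_spec} then furnishes a single $U\in\mathcal{O}^n$ that simultaneously ordered-diagonalizes both $X$ and $X+\alpha V$. Hence $U\in\mathcal{O}_X$ and $v:=\alpha^{-1}(\lambda(X+\alpha V)-\lambda(X))$ satisfies $V=U(\Diag v)U^T$. Running the displayed identity in reverse gives $\lambda(X)\in P_{\alpha}f(\lambda(X)+\alpha v)$, hence $v\in\partial_p f(\lambda(X))$ by Lemma~\ref{lem:sub_env}(ii).

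The Fr\'{e}chet converse is the main obstacle, since a generic $V$ is not anchored to any prox point a priori; I will install one. Given $V\in\hat\partial(f\circ\lambda)(X)$, pick $Y_{\alpha}\in P_{\alpha}(f\circ\lambda)(X+\alpha V)$ for each $\alpha\searrow 0$; the argument in the proof of Lemma~\ref{lem:sub_env}(i) forces $\alpha^{-1}(Y_{\alpha}-X)\to 0$. The ``moreover'' of Theorem~\ref{thm:prox_spec} supplies $U_{\alpha}\in\mathcal{O}^n$ simultaneously ordered-diagonalizing $Y_{\alpha}$ and $X+\alpha V$, and compactness of $\mathcal{O}^n$ lets us extract $U_{\alpha}\to U_{\ast}$ along a subsequence. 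The identity
\[
U_{\alpha}^T\Big(\tfrac{Y_{\alpha}-X}{\alpha}-V\Big)U_{\alpha}=\tfrac{1}{\alpha}\Diag\bigl(\lambda(Y_{\alpha})-\lambda(X+\alpha V)\bigr)
\]
has a diagonal right-hand side, so in the limit $U_{\ast}^TVU_{\ast}$ is diagonal; passing to the limit also in $U_{\alpha}^T(X+\alpha V)U_{\alpha}=\Diag\lambda(X+\alpha V)$ gives $U_{\ast}\in\mathcal{O}_X$. With $v:=\diag(U_{\ast}^TVU_{\ast})$, the already-established ``$\supseteq$'' half then certifies $v\in\hat\partial f(\lambda(X))$.

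The limiting and horizon formulas follow by passing to the limit in the proximal formula along approximating sequences $(X_i,V_i)$ with $V_i\in\partial_p(f\circ\lambda)(X_i)$ (and, in the horizon case, with a scaling $t_i\searrow 0$ applied to $V_i$). Writing $V_i=U_i(\Diag v_i)U_i^T$ from the proximal case, compactness of $\mathcal{O}^n$ together with continuity of $\lambda$ extracts $U_i\to U_{\ast}\in\mathcal{O}_X$ and $v_i\to v_{\ast}\in\partial f(\lambda(X))$ (resp.\ $\partial^{\infty}f(\lambda(X))$), producing the required decomposition. The reverse inclusion requires a small bookkeeping step: approximating vectors $x_i\to\lambda(X)$ with $w_i\in\partial_p f(x_i)$ need not be ordered, but permutation-invariance of $f$ combined with the freedom to premultiply $U\in\mathcal{O}_X$ by a permutation matrix in $\mathcal{O}^n$ realigns $X_i:=U(\Diag x_i)U^T$ and $V_i:=U(\Diag w_i)U^T$ so that the proximal formula applies and yields $V_i\in\partial_p(f\circ\lambda)(X_i)$ converging to the desired $V$.
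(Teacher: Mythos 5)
Your argument has one genuine omission: you never address prox-boundedness, and every pillar of your proof needs it. Lemma~\ref{lem:sub_env} is stated for lsc \emph{prox-bounded} functions, and your Fr\'{e}chet ``$\subseteq$'' step explicitly picks $Y_{\alpha}\in P_{\alpha}(f\circ\lambda)(X+\alpha V)$, which presupposes these sets are nonempty. The theorem, however, assumes only that $f$ is lsc and symmetric; for a function such as $f(x)=-\sum_i x_i^4$ the Moreau envelope is identically $-\infty$, all proximal mappings are empty, and both Lemma~\ref{lem:sub_env} and your choice of $Y_\alpha$ break down, even though formula \eqref{eqn:main} still holds. The paper's proof opens precisely with the repair you are missing: since subdifferentials are local and $f$ is lsc, one may replace $f$ by $f$ plus the indicator of the symmetric set $\cup_{\pi\in\Pi^n}\mathcal{B}_{\varepsilon}(\pi\lambda(X))$, which is lower-bounded (hence prox-bounded) and symmetric, without changing any subdifferential at $\lambda(X)$ or at nearby relevant points. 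Without some such localization, several of your steps literally fail.

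Apart from that, your proof is correct and follows essentially the paper's strategy: the same two pillars (the envelope identity of Theorem~\ref{thm:prox_spec} and the scalar characterization in Lemma~\ref{lem:sub_env}) give ``$\supseteq$'' for the proximal and Fr\'{e}chet cases, the ``moreover'' clause gives the proximal ``$\subseteq$'', and limits handle the limiting and horizon cases. Your treatment of the Fr\'{e}chet ``$\subseteq$'' is a mild variant: the paper deduces that Fr\'{e}chet subgradients commute with $X$ by approximating them with proximal subgradients at nearby points and then restricts the subgradient inequality to the diagonal slice, whereas you extract a convergent sequence of ordered diagonalizers of $(Y_\alpha, X+\alpha V)$; both are fine, and yours is pleasantly self-contained. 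One phrasing point: in that step, what certifies $v\in\hat{\partial}f(\lambda(X))$ is not the ``$\supseteq$'' implication itself but the displayed identity $(f\circ\lambda)_{\alpha}(X+\alpha V)=f_{\alpha}(\lambda(X)+\alpha v)$ combined with the fact that Lemma~\ref{lem:sub_env}(i) is an equivalence (together with $|V|=|v|$ and $F(X)=f(\lambda(X))$); stated that way the transfer genuinely goes both directions.
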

\begin{proof}
Fix a matrix $X$ in the domain of $f\circ\lambda$ and define $x:=\lambda(X)$. Without loss of generality, suppose that $f$ is lower-bounded.	
Indeed if this were not the case, then since $f$ is lsc there exists $\varepsilon >0$ so that $f$ is lower-bounded on the ball $\mathcal{B}_{\epsilon}(x)$.
Consequently adding to $f$ the indicator function of the symmetric set $\cup_{\pi\in \Pi} \mathcal{B}_{\epsilon}(\pi x)$ assures that the function is lower-bounded.

We first dispense with the easy inclusion $\subseteq$ for all the subdifferentials. To this end, recall that if $V$ is a proximal subgradient of $f\circ \lambda$ at $X$, then there exists $\alpha >0$ satisfying $X\in P_{\alpha} (f\circ \lambda)(X+\alpha V)$. Theorem~\ref{thm:prox_spec} then implies that $X$ and $V$ commute. Taking limits, we deduce that all Fr\'{e}chet, limiting, and horizon subgradients of $f\circ\lambda$ at $X$ also commute with $X$.
  Recalling that commuting matrices admit simultaneous spectral decomposition, basic definitions immediately yield the inclusion $\subseteq$ in equation \eqref{eqn:main} for the proximal and for the Fr\'{e}chet subdifferentials. 
  %Indeed, given a subgradient $V\in\partial_p (f\circ\lambda)(X)$, there exists an orthogonal matrix $U$ satisfying $V=U(\Diag v)U^T$ and $X=U(\Diag \lambda(X))U^T$ for some $v\in\R^n$, along with a real $r >0$ satisfying
  %$$f\circ\lambda(Y)\geq f\circ\lambda(X)+\langle V,Y-X \rangle-\frac{r}{2}|Y-X|^2 \qquad \textrm{ for all } Y\in\S^n.$$
  %Plugging in $Y:=U(\Diag y)U^T$ for arbitrary points $y\in\R^n$ implies the inclusion $v\in \partial_p f(\lambda(X))$, as claimed. The Fr\'{e}chet case is analogous. 
  Taking limits, we deduce the inclusion $\subseteq$ in \eqref{eqn:main} for the limiting and for the horizon subdifferentials, as well. 

%\begin{equation}\label{eqn:sim}
%X=U(\Diag \lambda( X)) U^T, \qquad V=U(\Diag v) U^T
%\end{equation}
%for some $U\in \mathcal{O}_X$ and $v\in\R^n$. The inclusion $v\in %\partial_p f(\lambda(X))$ is now immediate from definitions.
%This establishes the inclusion $\subseteq$ in equation \eqref{eqn:main} for the proximal subdifferential. Taking limits, we deduce the analogous inclusion for the limiting subdifferential $\partial (f\circ\lambda)$. This, in turn, implies that any Fr\'{e}chet subgradient $V\in \hat{\partial} f(X)$ commutes with $X$. Hence again we can form the expression {eqn:sim}  

Next, we argue the reverse inclusion. To this end, define 
%consider a vector $v\in \partial_p f(x)$. Then by Lemma~\ref{lem:sub_env}, there exists $\alpha>0$ satisfying $e_{\alpha} f(x+\alpha v)=f(x)+\frac{\alpha}{2}|v|^2$.
%Thus the inclusion $V\in \partial_p (f\circ \lambda)(X)$ holds if and only if there exists $\alpha >0$ satisfying $X=P_{\alpha}(f\circ\lambda)(X+\alpha V)$.
$V:=U(\Diag v)U^T$ for an arbitrary matrix  $U\in \mathcal{O}_X$ and any vector $v\in\R^n$. 
Then Theorem~\ref{thm:prox_spec}, along with the symmetry of the envelope $f_{\alpha}$, yields the equation
$$\frac{(f\circ\lambda)_{\alpha}(X+\alpha V)-f(\lambda(X))}{\alpha}=\frac{f_{\alpha}(x+\alpha v)-f(x)}{\alpha}.$$
Consequently if $v$ lies in $\partial_p f(x)$, then Lemma~\ref{lem:sub_env} shows that for some $\alpha >0$ the right-hand-side equals $\frac{|v|^2}{2}$, or equivalently $\frac{|V|^2}{2}$. Lemma~\ref{lem:sub_env} then yields the inclusion $V\in \partial_p (f\circ\lambda)(X)$. Similarly if $v$ lies in $\hat{\partial} f(x)$, then the same argument but with $\alpha$ tending to $0$ shows that $V$ lies in $\hat{\partial} (f\circ\lambda)(X)$.
%$$(f\circ\lambda)_{\alpha}(X+\alpha V)= f_{\alpha}(x+\alpha v)=f(x)+\frac{\alpha}{2}|v|^2=(f\circ\lambda)(X)+\frac{\alpha}{2}|V|^2.$$
%Lemma~\ref{lem:sub_env} then immediately implies
 Thus the inclusion $\supseteq$ in equation \eqref{eqn:main} holds for the proximal and for the Fr\'{e}chet subdifferentials. Taking limits, the same inclusion holds for the limiting and for the horizon subdifferentials. This completes the proof.
%again, we deduce that $V$ is a proximal subgradient of $f\circ\lambda$ at $X$, thereby completing the proof.
%Observe then that there exists a real $\alpha >0$ satisfying 
%\begin{equation}\label{eqn:prox_st}
%x\in P_{\alpha}f (x+\alpha v).
%\end{equation} 
%Now partition the interval $\{1,\ldots,n\}$ into subsets $I_1\ldots, I_k$ 
%satisfying 
%$$i,j\in I_l \quad\Leftrightarrow\quad x_i=x_j.$$
%We may thus be assured that every element of $I_l$ is strictly greater than any element of $I_m$ whenever $l<m$. Applying an easy exchange argument to \eqref{eqn:prox_st} shows that any element in the restriction of $x+\alpha v$ to $I_l$ is no smaller than any element of the restriction of $x+\alpha v$ to $I_m$ whenever $l<m$. Denote by $\pi_l$ the permutation on $I_l$ so that the restriction of the vector $\pi_l (v)$ to $I_l$ is in a nonincreasing order. Assemble the block diagonal matrix $\pi:=\Diag(\pi_1,\ldots,\pi_k)$ and note that the $x+\pi v$ is in a nonincreasing order as well.
%Observe now $$x= \pi x\in P_{\alpha}f (x+\alpha \pi v).$$
%Moreover observe that for any $U\in O_X$, the matrix $\widehat{U}:=U\pi^{-1}$ also lies in $O_X$. Consequently we obtain
%\begin{align*}
% X=\widehat{U}(\Diag x)\widehat{U}^T&\in  P_{\alpha}(f\circ \lambda)(\widehat{U}(\Diag x+\alpha \pi v) \widehat{U}^T)\\
% &=P_{\alpha}(f\circ \lambda)(X+\alpha U (\Diag v) U^T)
% \end{align*}
%We deduce $U (\Diag v) U^T\in \partial_p (f\circ\lambda) (X)$, verifying the inclusion $\supseteq$ in \eqref{eqn:main}.
\end{proof}

\begin{remark}
	It easily follows from Theorem~\ref{thm:submain} that the inclusion $\supseteq$ holds for the Clarke subdifferential. The reverse inclusion, however, requires a separate argument given in \cite[Sections 7-8]{eval}.
\end{remark}

In conclusion, we should mention that all the arguments in the section apply equally well for Hermitian matrices (with the standard Hermitian trace product), with the orthogonal matrices replaced by unitary matrices. Entirely analogous arguments also apply for functions of singular values of rectangular matrices (real or complex). For more details, see the appendix in the arXiv version of the paper.

\section{Hessians of $C^2$-smooth spectral functions}\label{sec:second}
In this section, we revisit the second-order theory of spectral functions. To this end, fix for the entire section an lsc symmetric function $f\colon\R^n\to\overline{\R}$ and define the spectral function $F:=f\circ\lambda$ on $\mathbf{S}^n$.
It is well known that $f$ is $C^2$-smooth around a matrix $X$ if and only if $F$ is $C^2$-smooth around $\lambda(X)$; see \cite{diff_1,diff_2,high_order,twice_diff}. Moreover, a formula for the Hessian of $F$ is available:
for matrices $A = \text{Diag}(a)$ and $B \in \mathbf{S}^n$ we have 
\[\nabla^2F(A)[B] = \text{Diag} \big ( \nabla^2 f(a) \text{diag}(B)
\big ) + \mathcal{A} \circ B,\]
where $\mathcal{A} \circ B$ is the Hadamard product and 
\[\mathcal{A}_{ij} = \begin{cases} \tfrac{\nabla f(a)_i -\nabla
	f(a)_j}{a_i-a_j} & \text{if $a_i \neq a_j$}\\
\nabla^2f(a)_{ii}-\nabla^2f(a)_{ij} & \text{if $a_i =
	a_j$} \end{cases}.\]
The assumption that $A$ is a diagonal matrix is made without loss of generality, as will be apparent shortly. 
In this section, we provide a transparent geometric derivation of the Hessian formula by considering invariance properties of $\gph \nabla F$. Some of our arguments give a geometric interpretation of the techniques in \cite{diff_2}.

\begin{remark}[Hessian and the gradient graph]\label{rem:hess_grad}
{\rm
Throughout the section we will appeal to the following basic property of the Hessian. For any $C^2$-smooth function $g$ on an Euclidean space, the vector $z:=\nabla^2 g(a)[b]$ is the unique vector satisfying $(z,-b)\in N_{\gph \nabla g} (a,\nabla g(a))$.}
\end{remark}

Consider now the action of the orthogonal group $\mathcal{O}^n$
on $\mathbf{S}^n$ by conjugation namely $U.X=UXU^T$. Recall that $F$ is {\em invariant} under this action, meaning $F(U.X)=F(X)$ for all orthogonal matrices $U$. This action naturally extends to the product space $\mathbf{S}^n\times\mathbf{S}^n$ by setting 
$U.(X,Y)=(U.X,U.Y)$. As we have seen, the graph $\gph\nabla F$ is then {\em invariant} with respect to this action:
$$U.\gph \nabla F=\gph \nabla F \qquad \textrm{ for all } U\in \mathcal{O}^n.$$
One immediate observation is that $N_{\gph \nabla F}(U.X,U.Y)=U.N_{\gph \nabla F}(X,Y)$. Consequently we deduce
$$(Z,-B)\in N_{\gph \nabla F}(X,Y)\quad\Longleftrightarrow\quad   (U.Z,-U.B)\in N_{\gph \nabla F}(U.X,U.Y)$$
The formula 
\begin{equation}\label{eqn:equivariance}
\nabla^2 F(X)[B]=U^T.\nabla^2 F(U.X)[U.B]
\end{equation}
 now follows directly from Remark~\ref{rem:hess_grad}, whenever $F$ is $C^2$-smooth around $X$. As a result, when speaking about the operator $\nabla^2 F(X)$, we may assume without loss of generality that $X$ and $\nabla F(X)$ are both diagonal matrices.
%Then the formula, we wish to understand is the following.
%For matrices $A = \text{Diag}(a)$ and $B \in %\mathbf{S}^n$ we have 
%\[\nabla^2F(A)[B] = \text{Diag} \big ( \nabla^2 f(a) \text{diag}(B)
%\big ) + \mathcal{A} \circ B,\]
%where $\mathcal{A} \circ B$ is the Hadamard product and 
%\[\mathcal{A}_{ij} = \begin{cases} \tfrac{\nabla f(a)_i -\nabla %
%	f(a)_j}{a_i-a_j}, & \text{if $a_i \neq a_j$}\\
%\nabla^2f(a)_{ii}-\nabla^2f(a)_{ij}, & \text{if $a_i =
%	a_j$} \end{cases}.\]
%This result extends to the general formula for the Hessian with
%arbitrary symmetric $A$ by appealing to equation~\eqref{eqn:equivariance}. 
%We seek to understand
%this formula from a geometric approach, particularly motivating the
%equation for $\mathcal{A}$. 

%Consider a smooth left group action $\theta: \mathcal{O}^n \times
%\mathbf{S}^n \to \mathbf{S}^n$ which acts by conjugation on $\mathbf{S}^n$, $\theta(U,
%A) = U \cdot A = U A U^T$, and define $\theta^{(A)}(\cdot)
%= \theta(A, \cdot)$. One immediate property of orthogonally invariant functions
%is $U \nabla F(A) U^T = \nabla F(UAU^T)$; hence the map
%$\theta^{(\nabla F(A))}$ represents ``curves'' on the image of $\nabla
%F$ which go through $\nabla F(A)$. This leads to a key observation: the 
%tangent space of the map $U \mapsto (UAU^T, U\nabla F(A)U^T)$ is a
%subset of the tangent space to the graph of $\nabla F(A)$. 

Next we briefly recall a few rudimentary properties of the conjugation action; see for example \cite[Sections 4, 8, 9]{Lee2}. We say that a $n \times n$ matrix $W$ is
 \textit{skew-symmetric} if $W^T=-W$. Then it is well-known that $\mathcal{O}^n$ is a smooth manifold and the tangent space to $\mathcal{O}^n$ at the identity matrix consists of skew-symmetric matrices:
 $$T_{\mathcal{O}^n}(I)=\{W\in \R^{n\times n}: W \textrm{ is skew-symmetric}\}.$$
The \textit{commutator} of two matrices $A,B\in\R^{n\times n}$, denoted by $[A,B]$ is the matrix
$[A,B] := AB - BA$. An easy computation shows that the commutator of
a skew-symmetric matrix with a symmetric matrix is itself symmetric. Moreover, 
the identity $$\langle X,[W,Z] \rangle=\langle [X,W],Z \rangle$$ holds for any matrices $X,Z\in {\bf S}^n$ and skew-symmetric $W$.
For any matrix $A \in \mathbf{S}^n$, the \textit{orbit
	of $A$}, denoted by $\mathcal{O}^n . A$ is the set
\[\mathcal{O}^n . A = \{U . A \, : \, U \in \mathcal{O}^n\}.\]
Similarly, the orbit of a pair $(A,B)\in\mathbf{S}^n\times \mathbf{S}^n$ is the set
\[\mathcal{O}^n . (A,B) = \{(U . A, U. B) \, : \, U \in \mathcal{O}^n\}.\]
An standard computation\footnote{Compute the differential of the mapping $\mathcal{O}^n \ni U\mapsto U.A$} now shows that orbits are smooth manifolds with tangent spaces
\begin{align*}
T_{\mathcal{O}^n . A}(A)&=\{[W,A]: W \textrm{ is skew-symmetric}\},\\
T_{\mathcal{O}^n . (A,B)}(A,B)&=\{([W,A],[W,B]): W \textrm{ is skew-symmetric}\}.
\end{align*}

%Since the orbit is the image of $\theta^{(A)}$, call
%$\theta^{(A)}$ the \textit{orbit map}. From these definition, there is an explicit
%representation for the tangent space to the orbit map in terms of
%skew-symmetric matrices. For any skew-symmetric matrix $W$,
%define the curve $\gamma : (-\varepsilon, \varepsilon) \to \mathcal{O}^n$ by
%$\gamma(t) = Id + tW$. Fixing $A \in \mathbf{S}^n$, compute
%\[d \, \theta^{(A)}_{Id}(W) = \frac{d}{dt} \big |_{t=0} \theta \circ
%\gamma(t) = [W,A].\]
%Hence, it follows that $d \, \theta^{(A)}_{Id} = \{[W, A] \,: \, W \text{ is
%	skew-symmetric}\}$. Extending this result to a product group action given by $(U,
%(A, V)) \mapsto (UAU^T, UVU^T)$, the differential of this orbit map is
%\[\{ \big ([W,A], [W, V] \big ) : \, W \text{ skew-symmetric}\}.\\\]

Now supposing that $F$ is twice differentiable at a matrix $A\in {\bf S}^{n\times n}$, the graph $\gph \nabla F$ certainly contains the orbit $\mathcal{O}^n.(A,\nabla F(A))$. In particular, this implies that the tangent space to $\gph \nabla F$ at $(A,\nabla F(A))$ contains the tangent space to the orbit:
$$\{ ([W,A], [W, \nabla F(A)] ) :
\, W \text{ skew-symmetric}\}.$$
%and consider
%the graph of $\nabla F$. As suggested before, the function $F$ is
%orthogonally invariant implies that $(UAU^T, U\nabla F(A)U^T)$ lie on the
%graph of $\nabla F$. So, the key observation can be
%restated as the tangent space to the graph of $\nabla F$ contains the set $\{ ([W,A], [W, \nabla F(A)] ) :
%\, W \text{ skew-symmetric}\}$.
Thus for any $B \in
\mathbf{S}^n$, the tuple $(\nabla^2F(A)[B], -B)$ is
orthogonal to the tuple $([W,A], [W, \nabla F(A)] )$ for any skew-symmetric matrix $W$. We record this elementary observation in the following lemma. This also appears as \cite[Lemma 3.2]{diff_2}.

\begin{lemma}[Orthogonality to orbits] \label{lem:skew} Suppose $F$ is $C^2$-smooth around $A\in{\bf S}^n$. Then for any skew-symmetric matrix $W$ and any $B \in
	\mathbf{S}^n$, we have
	\[\ip{\nabla^2 F(A)[B], [W,A]} = \ip{B, [W, \nabla
		F (A)]}.\]
\end{lemma}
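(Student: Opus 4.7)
The plan is to read the identity as an orthogonality relation between a normal vector to $\gph \nabla F$ and a tangent vector to an orbit sitting inside that graph. The excerpt already does most of the conceptual work: it identifies $(\nabla^2 F(A)[B], -B)$ as a normal vector at $(A,\nabla F(A))$ via Remark~\ref{rem:hess_grad}, and it identifies $([W,A],[W,\nabla F(A)])$ as a tangent vector to the orbit $\mathcal{O}^n.(A,\nabla F(A))$ at that same point. Since $F$ is $C^2$-smooth near $A$, the graph $\gph\nabla F$ is a smooth manifold near $(A,\nabla F(A))$, and the inclusion $\mathcal{O}^n.(A,\nabla F(A))\subseteq \gph\nabla F$ forces the orbit's tangent space to lie inside the tangent space of $\gph\nabla F$.

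With this setup, the proof is essentially one line: normal vectors are orthogonal to tangent vectors. First, I would make explicit that $\gph\nabla F$ is a $C^1$ submanifold of $\mathbf{S}^n\times\mathbf{S}^n$ (being the image of the $C^1$ embedding $X\mapsto(X,\nabla F(X))$), so that $N_{\gph\nabla F}(A,\nabla F(A))$ is the orthogonal complement of $T_{\gph\nabla F}(A,\nabla F(A))$ with respect to the product inner product. Next, I would record the two ingredients: the normal $(\nabla^2 F(A)[B],-B)$ from Remark~\ref{rem:hess_grad}, and the tangent $([W,A],[W,\nabla F(A)])$ from the orbit tangent space computation recalled just before the lemma. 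Pairing them yields
\[
\langle \nabla^2 F(A)[B],\,[W,A]\rangle + \langle -B,\,[W,\nabla F(A)]\rangle = 0,
\]
and rearranging gives exactly the claimed identity.

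There is no real obstacle here; the only thing to double check is that the product inner product on $\mathbf{S}^n\times\mathbf{S}^n$ used to define normals (and hence the Hessian via Remark~\ref{rem:hess_grad}) is the one that splits as the sum of the two Frobenius inner products, which is indeed the convention implicit in the characterization $(z,-b)\in N_{\gph\nabla g}(a,\nabla g(a))$. A parallel and slightly more pedestrian route, if one wished to avoid invoking the manifold structure of $\gph\nabla F$, would be to differentiate the identity $\nabla F(U_t.A)=U_t.\nabla F(A)$ along a smooth curve $U_t\in\mathcal{O}^n$ with $U_0=I$ and $\dot U_0=W$; this gives $\nabla^2 F(A)[[W,A]] = [W,\nabla F(A)]$, after which self-adjointness of $\nabla^2 F(A)$ and the skew-adjointness identity $\langle X,[W,Z]\rangle=\langle [X,W],Z\rangle$ recalled in the excerpt produce the stated equality.
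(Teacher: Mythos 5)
Your proof is correct and follows exactly the paper's route: the paper's proof of Lemma~\ref{lem:skew} is precisely the observation that the normal vector $(\nabla^2F(A)[B],-B)$ from Remark~\ref{rem:hess_grad} is orthogonal to the orbit tangent vector $([W,A],[W,\nabla F(A)])$, since the orbit $\mathcal{O}^n.(A,\nabla F(A))$ lies inside $\gph\nabla F$. Your alternative derivation by differentiating the equivariance identity $\nabla F(U_t.A)=U_t.\nabla F(A)$ is also fine, but the main argument is essentially identical to the paper's.
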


\begin{proof} This is immediate from the preceding discussion. %The second equality follows from the identity $\langle X,[W,Z] \rangle=\langle [X,W],Z \rangle$, which holds for any symmetric matrices $X,Z$ and skew-symmetric $W$.
\end{proof}

Next recall that the \textit{stabilizer}  of a matrix $A\in{\bf S}^n$ is the set:
\[\text{Stab}(A) = \{U \in \mathcal{O}^n \, : \, U.A = A\}.\]
Similarly we may define the set \text{Stab}(A,B).

\begin{lemma}[Tangent space to the stabilizer]\label{lem:tan_stab}
For any matrices $A,B\in {\bf S}^n$, the tangent spaces to $\text{Stab}(A)$ and to $\text{Stab}(A, B)$  at the identity matrix are the sets
$$\{W\in \R^{n\times n} \, : \, W \text{ skew-symmetric, } [W,A] = 0\},$$
$$\big \{  W\in \R^{n\times n} \, \, : \, W  \text{ skew-symmetric}, [W,A] = [W,B]= 0 \big
	\},$$
respectively.
\end{lemma}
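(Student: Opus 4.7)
My plan is to treat both stabilizers uniformly via the standard closed-subgroup argument: establish the $\subseteq$ inclusion by differentiating the defining equation along a smooth curve in the stabilizer, and then establish the $\supseteq$ inclusion by using the matrix exponential to exhibit a curve in the stabilizer whose initial velocity is a prescribed skew-symmetric matrix satisfying the commutator condition. Since $\text{Stab}(A)$ and $\text{Stab}(A,B)$ are closed subgroups of $\mathcal{O}^n$, they are embedded Lie subgroups, so their tangent spaces at $I$ are well-defined linear subspaces of $T_{\mathcal{O}^n}(I)$, the space of skew-symmetric matrices.

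For the $\subseteq$ direction in the case of $\text{Stab}(A)$, I would take a smooth curve $t \mapsto U(t) \in \text{Stab}(A)$ with $U(0)=I$ and $U'(0)=W$. The defining relation $U(t)AU(t)^T = A$ together with the identity $\frac{d}{dt}U(t)^T\big|_{t=0} = W^T = -W$ yields, upon differentiating at $t=0$, the equation $WA - AW = [W,A]=0$. The same computation applied to the pair equations $U(t)AU(t)^T=A$ and $U(t)BU(t)^T=B$ gives both $[W,A]=0$ and $[W,B]=0$, which shows $\subseteq$ in the tangent space to $\text{Stab}(A,B)$.

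For the $\supseteq$ direction, given a skew-symmetric $W$ with $[W,A]=0$, the plan is to consider the smooth curve $U(t):=\exp(tW)$. Since $W$ is skew-symmetric, $U(t)\in\mathcal{O}^n$ for all $t$, and of course $U(0)=I$ and $U'(0)=W$. The key computation is
\[\frac{d}{dt}\big(\exp(tW)A\exp(-tW)\big) = \exp(tW)\,[W,A]\,\exp(-tW)=0,\]
so $\exp(tW)A\exp(-tW) = A$ for all $t$, meaning the curve lies in $\text{Stab}(A)$. The identical argument with both $A$ and $B$ handles $\text{Stab}(A,B)$: if $W$ is skew-symmetric with $[W,A]=[W,B]=0$, then $\exp(tW)$ is a smooth curve in $\text{Stab}(A,B)$ with initial velocity $W$.

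There is no real obstacle here; the only subtlety is choosing the right curve to realize a prescribed tangent vector, and the matrix exponential does this cleanly precisely because the commutator condition makes the derivative of the conjugation trajectory vanish identically (not merely at $t=0$). This is the reason one needs $[W,A]=0$ globally to conclude $U(t).A = A$ globally, rather than just infinitesimal invariance. With this observation in hand, both tangent space identifications follow immediately.
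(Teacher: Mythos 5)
Your proof is correct, but it takes a genuinely different route from the paper's. The paper argues through the orbit map $\theta^{(A)}(U)=U.A$: it observes that $\theta^{(A)}$ is equivariant for left multiplication on $\mathcal{O}^n$ and conjugation on the orbit, invokes the equivariant rank theorem to conclude $\theta^{(A)}$ has constant rank and is in fact a submersion, and then identifies $\text{Stab}(A)=(\theta^{(A)})^{-1}(A)$ as a level set whose tangent space at $I$ is the kernel of the differential $W\mapsto[W,A]$ on skew-symmetric matrices (and similarly for $\text{Stab}(A,B)$). You instead use Cartan's closed subgroup theorem to know the stabilizers are embedded Lie subgroups with well-defined tangent spaces, get $\subseteq$ by differentiating $U(t)AU(t)^T=A$ (and $U(t)BU(t)^T=B$) along a curve, and get $\supseteq$ by exponentiating: $[W,A]=0$ makes $t\mapsto\exp(tW)A\exp(-tW)$ constant, so $\exp(tW)$ is a curve in the stabilizer with velocity $W$. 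Both arguments are complete. The paper's approach delivers both inclusions in one stroke (tangent space of a fiber of a submersion equals the kernel of its differential) and reuses the same orbit-map machinery behind its tangent-space-to-orbit formula; your approach is more elementary and hands-on, trading the equivariant rank theorem for the closed subgroup theorem plus a direct curve and matrix-exponential computation.
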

\begin{proof}[(Proof sketch)]
Define the orbit map $\theta^{(A)}\colon \mathcal{O}^n\to \mathcal{O}^n.A$ by 
setting $\theta^{(A)}(U):=U.A$. A quick computation shows that 
$\theta^{(A)}$ is equivariant with respect to left-multiplication action of $\mathcal{O}^n$ on itself and the conjugation action of $\mathcal{O}^n$ on $\mathcal{O}^n.A$.
%Letting $\mathcal{O}^n$ act by left-multiplication on itself and letting $\mathcal{O}^n$ act on conjugation on ${\bf S}^n$, the mapping $\theta^{(A)}$ is equivariant.
%$\theta^{(A)}(U) = UAU^T$. First, observe
%that the map is equivariant with respect to the action of $\mathcal{O}^n$ on
%itself by left translation and the action of $\theta$. The orthogonal group acts transitively on itself by
%left translation, so 
Hence the equivariant rank theorem (\cite[Theorem 7.25]{Lee2}) implies that
$\theta^{(A)}$ has constant rank. In fact, since $\theta^{(A)}$ is surjective, it is a submersion. It follows that the stabilizer 
$$\text{Stab}(A)=(\theta^{(A)})^{-1}(A)$$
is a smooth manifold with tangent space at the identity equal to the kernel of the differential $d \,
\theta^{(A)}\big|_{U=I}(W)=[W,A]$. %acting on the space $T_{\mathcal{O}^n}(I)$.
%$\text{Ker} \, d \,
%\theta^{(A)}_{I}=\{W\in \R^{n\times n} \, : \, W \text{ skew-symmetric, } [W,A] = 0\}$, as claimed.
The expression for the tangent space to $\text{Stab}(A)$ immediately follows. The analogous expression for $\text{Stab}(A,B)$ follows along similar lines.
%is a smooth manifold with tangent space 
%It is easy to verify that%
%$T_U(\text{Stab}(A)) \subset \text{Ker} \, d \,
%\theta^{(A)}_U$.  In fact equality is true by the rank-nullity
%theorem and the constant rank of $\theta^{(A)}$,
%\[\text{dim} \, T_U(\text{Stab}(A)) = \text{dim} \, M - \text{dim} \, 
%\text{Im} \, d \, \theta^{(A)}_U = \text{dim} \, \text{Ker} \, d \,
%\theta^{(A)}_U.\]
%For this particular case, 
%\[T_{\text{Id}}(\text{Stab}(A)) = \text{Ker} \, d \, \theta^{(A)}_{\text{Id}}
%= \{W \, : \, W \text{ skew-symmetric, } [W,A] = 0\}.\]
%This result extends to a product group action given by $\big ( U, (E,\hat{Z})
%\big ) \mapsto \big (UEU^T, U\hat{Z} U^T \big )$ for $U \in
%\text{Stab}(A)$ and thus the differential of the orbit map is
%\begin{equation} \big \{ \big ( [W,E],%%
%	[W,\hat{Z}] \big ) \, : \, W \text{ skew-symmetric}, [W,A] = 0 \big	\}. \label{equ:orbit} \\ \end{equation}
\end{proof}

With this, we are able to state and prove the main theorem.
\begin{theorem}[Hessian of $C^2$-smooth spectral functions]\label{thm:main_sec_ord} Consider a symmetric function $f\colon\R^n\to\R$ and the spectral function $F= f\circ \lambda$. Suppose  that $F$ is $C^2$-smooth around a matrix $A := \text{Diag}(a)$ and for any matrix matrix $B \in {\bf S}^n$ define $Z := \nabla^2F(A)[B]$. Then equality $$\text{diag}(Z) = \nabla^2 f(a)[\text{diag}(B)],$$ holds, while for 
%	\begin{enumerate}[(a)]
%	 $\text{diag}(Z) = \nabla^2 f(a)[\text{diag}(B)]$.
		indices $i\neq j$, we have
		
\[Z_{ij}  =		\begin{cases} B_{ij} \left ( \frac{\nabla f(a)_i-\nabla
			f(a)_j}{a_i-a_j}\right ) &\mbox{if } a_i \neq a_j \\ 
 B_{ij} \big (\nabla^2f(a)_{ii}-\nabla^2f(a)_{ij}\big ) & \mbox{if } a_i = a_j. \end{cases} \]
%	\end{enumerate}
\end{theorem}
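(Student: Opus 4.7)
The plan is to compute the entries of $Z := \nabla^2 F(A)[B]$ in three cases: off-diagonal entries $Z_{ij}$ with $a_i \neq a_j$, off-diagonal entries $Z_{ij}$ with $a_i = a_j$, and diagonal entries $Z_{ii}$. As preparation, note that every sign-flip matrix $E_\sigma := \Diag(\sigma)$ with $\sigma \in \{\pm 1\}^n$ lies in $\text{Stab}(A)$, so equivariance of $\nabla F$ forces $\nabla F(A)$ to be diagonal; combining this with $F(\Diag(x)) = f(x)$ pins down $\nabla F(A) = \Diag(\nabla f(a))$. I will also use the identity $\nabla^2 F(A)[U B U^T] = U Z U^T$, valid for every $U \in \text{Stab}(A)$ as a direct consequence of \eqref{eqn:equivariance}.

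For the case $a_i \neq a_j$, I apply Lemma~\ref{lem:skew} to $W := e_i e_j^T - e_j e_i^T$. Since $A$ and $\nabla F(A)$ are both diagonal, a direct computation gives $[W,A] = (a_j - a_i)(e_i e_j^T + e_j e_i^T)$ and $[W, \nabla F(A)] = (\nabla f(a)_j - \nabla f(a)_i)(e_i e_j^T + e_j e_i^T)$. The lemma then reads $2 Z_{ij}(a_j - a_i) = 2 B_{ij}(\nabla f(a)_j - \nabla f(a)_i)$, and dividing by $a_j - a_i$ recovers the stated formula.

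The degenerate case $a_i = a_j$ is the main obstacle: both commutators above vanish, and Lemma~\ref{lem:skew} reduces to the trivial identity $0 = 0$. The key additional observation is that $a_i = a_j$ forces $\nabla f(a)_i = \nabla f(a)_j$, since $f$ is symmetric under transposing coordinates $i$ and $j$. I would resolve this case by perturbation: at $A(t) := A + t\,\Diag(e_i)$ the diagonal entries at indices $i$ and $j$ differ for every $t \neq 0$, so the previous case applies at $A(t)$ and yields $\nabla^2 F(A(t))[B]_{ij} = B_{ij}\,(\nabla f(a + te_i)_i - \nabla f(a + te_i)_j)/t$. Continuity of $\nabla^2 F$ lets me send $t \to 0$, and a first-order Taylor expansion of $\nabla f$ -- again invoking $\nabla f(a)_i = \nabla f(a)_j$ to cancel the leading term -- identifies the limit as $B_{ij}(\nabla^2 f(a)_{ii} - \nabla^2 f(a)_{ji})$, which equals $B_{ij}(\nabla^2 f(a)_{ii} - \nabla^2 f(a)_{ij})$ by symmetry of the Hessian.

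For the diagonal entries, I average the invariance identity $\nabla^2 F(A)[E_\sigma B E_\sigma] = E_\sigma Z E_\sigma$ over all sign patterns $\sigma \in \{\pm 1\}^n$. Since the expectation of $\sigma_k \sigma_\ell$ vanishes unless $k = \ell$, this produces $\Diag(\diag(Z)) = \nabla^2 F(A)[\Diag(\diag(B))]$, reducing everything to the Hessian along a purely diagonal direction. On diagonal matrices the identity $F(\Diag(x)) = f(x)$ trivializes the computation to $\nabla^2 F(A)[\Diag(d)] = \Diag(\nabla^2 f(a)\,d)$, which yields $\diag(Z) = \nabla^2 f(a)\,\diag(B)$ and completes the derivation.
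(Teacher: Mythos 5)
Your proposal is correct, and it differs from the paper's proof in instructive ways. The nondegenerate case $a_i\neq a_j$ is essentially identical: both arguments apply Lemma~\ref{lem:skew} with $W^{(i,j)}=e_ie_j^T-e_je_i^T$. For the diagonal part, the paper pairs the normal vector $(Z,-B)$ of $\gph\nabla F$ against tangent vectors coming from the diagonal embedding of $\gph\nabla f$, whereas you average the equivariance identity $\nabla^2F(A)[E_\sigma BE_\sigma]=E_\sigma ZE_\sigma$ over the sign-flip subgroup of $\text{Stab}(A)$; both ultimately reduce to the chain rule for $f=F\circ\Diag$, and yours is a clean alternative (one small caveat: the identity $F(\Diag x)=f(x)$ by itself only gives $\diag\big(\nabla^2F(A)[\Diag d]\big)=\nabla^2 f(a)d$, which is all you actually need, while the full matrix equality $\nabla^2F(A)[\Diag d]=\Diag\big(\nabla^2 f(a)d\big)$ needs the sign-flip observation you already have in hand). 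The genuine divergence is the degenerate case $a_i=a_j$: the paper stays at the single point $A$ and exploits the stabilizer --- $T_{\gph\nabla F}(A,\nabla F(A))$ is $\text{Stab}(A)$-invariant, so the commutators $([W,E_i],[W,\hat Z])$ with $[W,A]=0$ are tangent vectors, and orthogonality to $(Z,-B)$ yields the formula --- while you perturb to $A+t\,\Diag(e_i)$, apply the nondegenerate formula there, and let $t\to 0$ using continuity of $\nabla^2F$, a Taylor expansion of $\nabla f$, and the symmetry fact $\nabla f(a)_i=\nabla f(a)_j$. Your route is more elementary and fully legitimate under the theorem's $C^2$ hypothesis, but it leans on continuity of the Hessian along a curve of nearby matrices; the paper's pointwise, purely geometric argument is the one the authors hope (see their closing remark) might extend to generalized Hessians of nonsmooth spectral functions, where a perturbation-plus-continuity argument has no obvious analogue.
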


\begin{proof} 
	First observe that clearly $f$ must be $C^2$ smooth at $a$.	
	Now, since $A$ is diagonal, so is the gradient $\nabla F(A)$. So without loss of generality, we can assume $\nabla F(A) = \text{Diag}(\nabla f(a))$. 
	
	Observe now that $(Z, -B)$ is orthogonal to the tangent space of $\gph \nabla F$ at $(A, \nabla F(A))$. On the other hand, for any vector $a'\in \R^n$, we have equality
	\begin{align*}\ip{ \begin{pmatrix} Z\\ -B 
			\end{pmatrix}, \begin{pmatrix} \text{Diag}(a')-\text{Diag}(a)
			\\  \text{Diag}(\nabla f(a') )-\text{Diag}( \nabla f(a)
			) \end{pmatrix}}=
		\ip{\begin{pmatrix} \text{diag}(Z) \\
				-\text{diag}(B) \end{pmatrix}, \begin{pmatrix} a'-a \\ \nabla
				f(a') - \nabla
				f(a) \end{pmatrix} }.
\end{align*}
It follows immediately that the tuple $(\text{diag}(Z), -\text{diag}(B))$ is orthogonal to the tangent
space of $\gph \nabla f$ at $(a, \nabla f(a))$. Hence we deduce the equality
$\text{diag}(Z)=\nabla^2 f(a)[\text{diag}(B)]$ as claimed.

Next fix  indices $i$ and $j$ with $a_i\neq a_j$, and define the skew-symmetric matrix $W^{(i,j)}:=e_ie_j^T-e_je_i^T$, where $e_k$ denotes the $k$'th standard basis vector.  %satisfying 
%\[W_{k \ell}^{(i,j)} = \begin{cases}
%1 & \text{if $(k, \ell) = (i,j)$}\\
%-1 & \text{if $(\ell, k) = (j, i)$}\\
%0 & \text{otherwise} 
%\end{cases}. \]
Applying Lemma~\ref{lem:skew} with the skew-symmetric matrix $W=\tfrac{1}{a_i-a_j}  W^{(i,j)}$, we obtain 

\begin{align*}
-2Z_{ij} &= \ip{Z, \big [\tfrac{1}{a_i-a_j}  W^{(i,j)}, A \big ]} =
-\ip{ [\tfrac{1}{a_i-a_j}  W^{i,j}, B \big],
	\nabla F(A)} \\
	&= -\ip{ \textrm{diag}[\tfrac{1}{a_i-a_j}  W^{i,j}, B \big],
	\nabla f(a)}=-2B_{ij}\left (\frac{\nabla f(a)_i-\nabla f(a)_j}{a_i-a_j} \right ).
\end{align*}
The claimed formula $Z_{ij}=B_{ij}\left (\frac{\nabla f(a)_i-\nabla f(a)_j}{a_i-a_j} \right )$ follows.

Finally, fix indices $i$ and $j$, with $a_i=a_j$. Observe now the inclusion $$\text{Stab}(A) \subset \text{Stab}(\nabla
F(A)).$$ Indeed for any matrix $U \in \text{Stab}(A)$,  we have
\[\nabla F(A) = \nabla F(UAU^T) = U \nabla F(A) U^T.\]
This in particular immediately implies that the tangent space $T_{\gph \nabla F}(A,\nabla F(A))$ is invariant under the action of $\text{Stab}(A)$, that is 
 $$U.T_{\gph \nabla F}(A,\nabla F(A))=T_{\gph \nabla F}(A,\nabla F(A))$$
for any $U\in \text{Stab}(A)$. Hence their entire orbit $\text{Stab}(A).(X,Y)$ of any tangent vector
$(X,Y)\in T_{\gph \nabla F}(A,\nabla F(A))$ is contained in the tangent space 
$T_{\gph \nabla F}(A,\nabla F(A))$. We conclude that the tangent space to such an orbit $\text{Stab}(A).(X,Y)$ at $(X,Y)$ is contained in $T_{\gph \nabla F}(A,\nabla F(A))$ as well.

%for any vector $(X,V)$ in the tangent space to the graph of
%$\nabla F$ at $(A, \nabla F(A))$,
%\[ \{ \big ( U \cdot X, U \cdot V \big ) : \, U \in \text{Stab}(A) \} \subset T_{(A, \nabla F(A))}(\text{gph} \nabla
%F).\]
%Consequently the tangent space to the orbit of under $\text{Stab}(A)$,
%explicitely expressed in (\ref{equ:orbit}),
%is also in the tangent space to the graph of $\nabla F$ at the point
%$(A, \nabla F(A))$.\\

Define now the matrices $E_i := \text{Diag}(e_i)$ and $\hat{Z} := \text{Diag}(\nabla^2
f(a)[e_i])$. Because $F$ is $C^2$-smooth, clearly the inclusion $(E_i, \hat{Z}) \in T_{\text{gph}
\nabla F} (A, \nabla F(A)$ holds. The above argument, along with Lemma~\ref{lem:tan_stab}, immediately implies the inclusion 
\[\{([W,E_i], [W, \hat{Z}] ) : \, W
\text{ skew-symmetric}, [W, A] =0\} \quad\subseteq\quad T_{\text{gph} \nabla F}(A, \nabla
	F(A))\]
and in particular, $([W, E_i], [W, \hat{Z}])$ is orthogonal to
$(Z, -B)$ for any skew-symmetric $W$  satisfying $[W,A]=0$. To finish the proof, simply set $W = W^{(i,j)}$. Then since $a_i=a_j$, we have $[W, A] = 0$ and therefore 
\begin{align*}
-2Z_{ij} = \ip{Z, [W^{(i,j)}, E_i]} &= \langle B, [W^{(i,j)},\hat{Z}]\rangle =-\ip{[W^{(i,j)}, B], \hat{Z}} \\
&=-2
B_{ij} \big (\nabla^2f(a)_{ii}-\nabla^2f(a)_{ij} \big ),
\end{align*}
as claimed. This completes the proof.
\end{proof}

\begin{remark}
The appealing geometric techniques presented in this section seem promising for obtaining at least necessary conditions for the generalized Hessian, in the sense of \cite{Mord_1}, of spectral functions that are not necessarily $C^2$-smooth. Indeed the arguments presented deal entirely with the graph $\gph \nabla f$, a setting perfectly adapted to generalized Hessian computations. There are difficulties, however. To illustrate, consider a matrix $Z\in \partial^2 F(A|V)$. Then one can easily establish properties of $\Diag Z$ analogous to those presented in Theorem~\ref{thm:main_sec_ord}, as well as properties of $Z_{ij}$ for indices $i$ and $j$ satisfying $a_i\neq a_j$. The difficulty occurs for indices $i$ and $j$ with $a_i=a_j$. In this case, our argument used explicitly the fact that tangent cones to $\gph \partial f$ are linear subspaces, a property that is decisively false in the general setting. 
\end{remark}

\appendix\section{Comments on isometric group actions}\label{sec:abst}
It is clear from the Section 1-4, that there is a richer underlying structure governing the results of Theorems~\ref{thm:prox_spec} and \ref{thm:submain}, with the trace inequality (Theorem~\ref{thm:fan}) playing an essential role. This appendix outlines a rudimentary algebraic framework in which the previous arguments can be understood, unifying the eigenvalue and the singular value pictures \cite{send}, while leaving room for new settings to be explored.

Fix a metric space $\mathcal{V}$ and a group $\mathcal{G}$ acting on $\mathcal{V}$ by isometries. %, meaning
%$$d(g x,g y)=d(x,y) \qquad \textrm{ for all } x,y\in \mathcal{V} \textrm{ and } g\in \mathcal{G}.$$
Let $\mathcal{H}$ be another metric space injecting isometrically by a mapping $i\colon\mathcal{H}\hookrightarrow\mathcal{V}$ into $\mathcal{V}$. Intuitively, $\mathcal{H}$
is a subset of $\mathcal{V}$ with $i$ the canonical injection. Notationally, however, it is cleaner to consider $\mathcal{H}$ as a separate entity.
Without loss of generality, we will use the symbol $d(\cdot,\cdot)$ to denote the metric both in $\mathcal{V}$ and in $\mathcal{H}$. Fix also a distinguished $\mathcal{G}$-invariant mapping $p\colon \mathcal{V}\to\mathcal{H}$. 
The diagram summarizes the notation:
$$\mathcal{H} \mathrel{\mathop{\rightleftarrows}^{i}_{p}} \mathcal{V}$$
It is instructive to keep in mind the following motivating examples: 
$$\R^n \mathrel{\mathop{\rightleftarrows}^{\Diag}_{\lambda}} {\bf S}^n,\qquad \R^n \mathrel{\mathop{\rightleftarrows}^{\Diag}_{\lambda}} {\bf H}^n, \qquad \R^m \mathrel{\mathop{\rightleftarrows}^{\Diag}_{\sigma}} {\bf R}^{m\times n},\qquad \R^m \mathrel{\mathop{\rightleftarrows}^{\Diag}_{\sigma}} {\bf C}^{m\times n}.$$
In the first example (the focus of the previous sections), the group $\mathcal{G}=\mathcal{O}^n$ acts by conjugation $U.X=UXU^T$. In the second example,  
${\bf H}^n$ is the space of $n\times n$ Hermitian matrices (with the standard Hermitian inner production $\langle X,Y\rangle=\textrm{re } X^*Y$), and $\mathcal{G}$ is the unitary group acting by $U.X= U X U^*$. In the third example $\R^{m\times n}$ is the space of real $m\times n$ matrices (with the trace product $\langle X,Y\rangle=\tr X^TY$), the group $\mathcal{G}=\mathcal{O}^m\times\mathcal{O}^n$ acts by 
$(U,V).X=UXV^T$, and $\sigma$ is the mapping assigning to each $m\times n$ matrix its vector of singular values in a nonincreasing order. The fourth example is analogous.
 The goal of this section is to isolate the shared features of the four examples above that make a subdifferential formula along the lines of \eqref{eqn:main} possible. That is, we aim to investigate conditions on $p$ under which one can effectively treat $\mathcal{G}$-invariant functions $F\colon\mathcal{V}\to\overline{\R}$ by instead considering their restrictions $F\circ i\colon\mathcal{H}\to\overline{\R}$. 
Some notational abstraction will greatly help simplify the ensuing formulas. To this end, following standard terminology, the {\em pullback} of any mapping $F$ on $\mathcal{V}$ is the mapping $F^*:=F\circ i$ defined now on $\mathcal{H}$. Similarly the pullback of a mapping $f$ on $\mathcal{H}$ is the mapping $f^*:=f\circ p$ on $\mathcal{V}$. The {\em pushforward} of $p$ is the mapping $p_*=i\circ p\colon\mathcal{V}\to\mathcal{V}$.
For instance in the first example, for any function $F\colon\S^n\to\overline{\R}$, the pullback $F^*(x)$ is the diagonal restriction $x\mapsto F(\Diag(x))$; the pullback of a function $f$ on $\R^n$ is the spectral mapping $f^*=f\circ\lambda$; and the pullback of $\lambda$ is the reordering mapping $\uparrow\colon\R^n\to\R^n$, meaning
$x^{\uparrow}$ is obtained by permuting coordinates of  $x$ to be nonincreasing.
The following definition identifies the salient properties needed, in light of the current paper, for effective treatment of $\mathcal{G}$-invariant functions $F\colon\mathcal{V}\to\overline{\R}$ by means of their restrictions $F^*\colon\mathcal{H}\to\overline{\R}$. For clarity, elements of $\mathcal{H}$ will be denoted with lower-case letters, while elements of $\mathcal{V}$ will be denoted with upper-case letters. 
\begin{definition}[Metric reduction]
The space $\mathcal{V}$ {\em metrically reduces} to $\mathcal{H}$ if the following compatibility conditions hold: 
\begin{enumerate}
\item {\bf (Idempotence)} $p^*\circ p^*=p^*$;
\item {\bf (Orbit preservation)} $p_*(X)$ lies in the $\mathcal{G}$-orbit of $X$, for all $X\in \mathcal{V}$;
\item {\bf (Non-expansiveness)} $d\big(p(X),p(Y))\leq d\big(X,Y)\qquad \textrm{ for all } X,Y\in\mathcal{V}$;
\end{enumerate}
The reduction is {\em faithful} if in addition the following is true for all $X,Y\in \mathcal{V}$:
$$d\big(p(X),p(Y))= d\big(X,Y)\quad \Longrightarrow\quad \exists g\in\mathcal{G} \textrm{ with } gX=p_*(X) \textrm{ and }gY=p_*(Y).$$
\end{definition}
\smallskip
An appropriate notion of symmetry on $\mathcal{H}$ that is compatible with $\mathcal{G}$-invariance on $\mathcal{V}$ is as follows. A function $f\colon\mathcal{H}\to\overline{\R}$ is $p${\em -symmetric} whenever 
$$f(p^*(x))=f(x) \quad\textrm{ for any } x\in \mathcal{H}.$$
In the spectral example, $\S^n$ faithfully reduces to $\R^n$ as a consequence of Theorem~\ref{thm:fan};  $\mathcal{G}$-invariant functions are what we called spectral, while $\lambda$-symmetric functions are what we called symmetric. The other three running examples are analogous. 
\begin{lemma}[Invariance and symmetry]
The following two properties of a function $F\colon\mathcal{V}\to\overline{\R}$ are equivalent.
\begin{enumerate}
\item[(i)] $F$ is $\mathcal{G}$-invariant.
\item[(ii)] $F=f\circ p$ for some $p$-symmetric function $f$ on $\mathcal{H}$.
\item[(iii)] $F=(F^*)^*$
\end{enumerate}
\end{lemma}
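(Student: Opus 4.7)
The plan is to prove the cyclic chain $(ii) \Rightarrow (i) \Rightarrow (iii) \Rightarrow (ii)$. The arguments are essentially formal manipulations of the pullback/pushforward notation, so the main obstacle is really just keeping the compositions straight rather than anything conceptually deep; each axiom of metric reduction is used in at most one place, and idempotence plays no role here.

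For $(ii) \Rightarrow (i)$, I would argue directly. Suppose $F = f \circ p$ for a $p$-symmetric $f$. Since $p$ is $\mathcal{G}$-invariant by hypothesis, for any $g \in \mathcal{G}$ and $X \in \mathcal{V}$,
\[
F(gX) = f(p(gX)) = f(p(X)) = F(X),
\]
so $F$ is $\mathcal{G}$-invariant. This uses only the $\mathcal{G}$-invariance of $p$, not the $p$-symmetry of $f$ or any reduction axiom.

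For $(i) \Rightarrow (iii)$, I would first unpack the target equality. By definition of the pullbacks, $(F^*)^* = (F \circ i) \circ p = F \circ p_*$, so $(iii)$ amounts to the pointwise identity $F(p_*(X)) = F(X)$ for every $X \in \mathcal{V}$. This is precisely where orbit preservation is needed: since $p_*(X)$ lies in the $\mathcal{G}$-orbit of $X$, choose $g \in \mathcal{G}$ with $p_*(X) = gX$; then $\mathcal{G}$-invariance of $F$ yields $F(p_*(X)) = F(gX) = F(X)$, as required.

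For $(iii) \Rightarrow (ii)$, I would take the canonical choice $f := F^*$ as the candidate function on $\mathcal{H}$. The factorization $F = f \circ p$ then reads $F = F^* \circ p = F \circ p_*$, which is $(iii)$. To verify that $f$ is $p$-symmetric, I need $f(p^*(x)) = f(x)$ for $x \in \mathcal{H}$; unwinding,
\[
f(p^*(x)) = F(i(p(i(x)))) = F(p_*(i(x))) = F(i(x)) = f(x),
\]
where the third equality is $(iii)$ applied at $X := i(x)$. This closes the cycle and completes the proof.
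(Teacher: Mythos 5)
Your proof is correct and follows essentially the same route as the paper: the same cyclic chain of implications (merely entered at a different vertex), the same canonical choice $f:=F^*$, and the same single use of orbit preservation in $(i)\Rightarrow(iii)$, with the $p$-symmetry verification in $(iii)\Rightarrow(ii)$ matching the paper's computation $F^*(y)=F^*(p^*(y))$.
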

\begin{proof}
Suppose that $(i)$ holds and define $f:=F^*$. Then observe 
$f^*(X)=F\circ p_*(X)$. By the orbit preservation property, there exists some $g\in \mathcal{G}$ satisfying $gx=p_*(X)$ and hence $f^*(X)=F(X)$ for all $X\in \mathcal{V}$. Hence implication $(iii)$ holds. Suppose now that $(iii)$ holds, meaning
$F(X)=F^*(p(X))$ for all $X\in\mathcal{V}$. Hence, in particular 
$F^*(y)=F(i(y))=(F^*)^*(i(y))=F^*(p\circ i(y))=F^*(p^*(y))$ for all $y\in \mathcal{H}$. By definition then $F^*$ is $p$-symmetric and $(ii)$ follows. The final implication $(ii)\Rightarrow (i)$ is trivial since $p$ is $\mathcal{G}$-invariant.
 \end{proof}
For notational convenience, henceforth, for any point $y\in\mathcal{H}$ the corresponding capital letter $Y$ will stand for $i(y)$.
Observe that the Moreau envelopes and proximal mappings of functions on $\mathcal{V}$ and on $\mathcal{H}$ have obvious meanings. A proof nearly identical to that of Theorem~\ref{thm:prox_spec} shows that if $\mathcal{V}$ metrically reduces to $\mathcal{H}$, then for any lsc $p$-symmetric function $f\colon\mathcal{H}\to\overline{\R}$ the commutatively relation holds:
$$(f^*)_{\alpha}=(f_{\alpha})^*.$$
Assuming in addition that the reduction is faithful, the equation holds:
\begin{equation*}
	P_{\alpha}f^*(X)=\big\{g^{-1}Y: y\in P_{\alpha}f(p(X)),\, g\in \mathcal{G}_X\big\},
\end{equation*}
where $$\mathcal{G}_{X}:=\{g\in\mathcal{G}: p_*(X)=gX\}.$$
Moreover, for any $Z\in P_{\alpha}f^*(X)$ there exists $g\in \mathcal{G}$ satisfying  $p_*(Z)=gZ$ and  $p_*(X)=gX$. 
Suppose moreover that $\mathcal{V}$ and $\mathcal{H}$ are Euclidean spaces with $i$ a linear mapping, and that $\mathcal{G}$ is a compact subgroup of linear isometries. Then a proof identical to that of Theorem~\ref{thm:submain} shows that the following formula holds:
\begin{equation*}
\partial f^*(X)=\big\{g^{-1}V: v\in \partial f(p(X)),\, g\in \mathcal{G}_X\big\},
\end{equation*}
The four running examples of the section fit nicely into this framework.

\bibliography{bibliography}{}

\begin{thebibliography}{10}

\bibitem{spec_id}
A.~Daniilidis, D.~Drusvyatskiy, and A.S. Lewis.
\newblock Orthogonal invariance and identifiability.
\newblock {\em SIAM J. Matrix Anal. Appl.}, 35(2):580--598, 2014.

\bibitem{spec_prox}
A.~Daniilidis, A.S. Lewis, J.~Malick, and H.~Sendov.
\newblock Prox-regularity of spectral functions and spectral sets.
\newblock {\em J. Convex Anal.}, 15(3):547--560, 2008.

\bibitem{man}
A.~Daniilidis, J.~Malick, and H.S. Sendov.
\newblock Locally symmetric submanifolds lift to spectral manifolds.
\newblock {\em Preprint U.A.B. {\bf 23}/2009, 43 p., arXiv:1212.3936
  [math.OC]}, 2012.

\bibitem{cov_orig}
C.~Davis.
\newblock All convex invariant functions of hermitian matrices.
\newblock {\em Arch. Math.}, 8:276--278, 1957.

\bibitem{mather}
D.~Drusvyatskiy and M.~Larsson.
\newblock Approximating functions on stratified sets.
\newblock {\em Trans. Amer. Math. Soc.}, 367(1):725--749, 2015.

\bibitem{Lee2}
J.M. Lee.
\newblock {\em Introduction to smooth manifolds}, volume 218 of {\em Graduate
  Texts in Mathematics}.
\newblock Springer, New York, second edition, 2013.

\bibitem{con_herm}
A.S. Lewis.
\newblock Convex analysis on the {H}ermitian matrices.
\newblock {\em SIAM J. Optim.}, 6(1):164--177, 1996.

\bibitem{eval}
A.S. Lewis.
\newblock Nonsmooth analysis of eigenvalues.
\newblock {\em Math. Program.}, 84(1, Ser. A):1--24, 1999.

\bibitem{Kon}
A.S. Lewis.
\newblock Convex analysis on {C}artan subspaces.
\newblock {\em Nonlinear Anal.}, 42(5, Ser. A: Theory Methods):813--820, 2000.

\bibitem{twice_diff}
A.S. Lewis and H.S. Sendov.
\newblock Twice differentiable spectral functions.
\newblock {\em SIAM J. Matrix Anal. Appl.}, 23(2):368--386 (electronic), 2001.

\bibitem{send}
A.S. Lewis and H.S. Sendov.
\newblock Nonsmooth analysis of singular values. {I}. {T}heory.
\newblock {\em Set-Valued Anal.}, 13(3):213--241, 2005.

\bibitem{singII}
A.S. Lewis and H.S. Sendov.
\newblock Nonsmooth analysis of singular values. {II}. {A}pplications.
\newblock {\em Set-Valued Anal.}, 13(3):243--264, 2005.

\bibitem{Mord_1}
B.S. Mordukhovich.
\newblock {\em Variational Analysis and Generalized Differentiation I: Basic
  Theory}.
\newblock Grundlehren der mathematischen Wissenschaften, Vol 330, Springer,
  Berlin, 2006.

\bibitem{RW98}
R.T. Rockafellar and R.J-B. Wets.
\newblock {\em Variational {A}nalysis}.
\newblock Grundlehren der mathematischen Wissenschaften, Vol 317, Springer,
  Berlin, 1998.

\bibitem{send_thes}
H.S. Sendov.
\newblock {\em Variational spectral analysis}.
\newblock ProQuest LLC, Ann Arbor, MI, 2001.
\newblock Thesis (Ph.D.)--University of Waterloo (Canada).

\bibitem{high_order}
H.S. Sendov.
\newblock The higher-order derivatives of spectral functions.
\newblock {\em Linear Algebra Appl.}, 424(1):240--281, 2007.

\bibitem{diff_2}
M.~{\v{S}}ilhav{\'y}.
\newblock Differentiability properties of isotropic functions.
\newblock {\em Duke Math. J.}, 104(3):367--373, 2000.

\bibitem{diff_1}
J.~Sylvester.
\newblock On the differentiability of {${\rm O}(n)$} invariant functions of
  symmetric matrices.
\newblock {\em Duke Math. J.}, 52(2):475--483, 1985.

\bibitem{theo_ineq}
C.M. Theobald.
\newblock An inequality for the trace of the product of two symmetric matrices.
\newblock {\em Math. Proc. Cambridge Philos. Soc.}, 77:265--267, 1975.

\bibitem{von_Neumann}
J.~von Neumann.
\newblock Some matrix inequalities and metrization of matrix-space.
\newblock {\em Tomck. Univ. Rev.}, 1:286--300, 1937.

\end{thebibliography}
\bibliographystyle{plain}

\end{document}